\documentclass{amsart}
\usepackage{amsmath,amssymb,dsfont}

\usepackage{graphicx,color}
\usepackage{pstricks, pst-plot, pst-node, pst-grad, pst-math}
\usepackage{pst-plot}
\usepackage{todonotes}
\usepackage[utf8]{inputenc}
\usepackage[T1]{fontenc}
\usepackage{lmodern}
\newtheorem{theorem}{Theorem}[section]
\newtheorem{lemma}[theorem]{Lemma}

\newtheorem{corollary}[theorem]{Corollary}

\hfuzz4pt

\theoremstyle{definition}

\newtheorem{remark}[theorem]{Remark}
\newtheorem{assumption}[theorem]{Assumption}

\numberwithin{equation}{section}

%

\renewcommand{\vec}[1]{\left( \!\! \begin{array}{c} #1 \end{array} \!\! \right)}

\newcommand{\rd}{{\,\rm d}}
\newcommand{\e}{{\rm e}}

\newcommand{\N}{{\mathbb N}}

\newcommand{\R}{{\mathbb R}}
\newcommand{\C}{{\mathbb C}}

\newcommand{\dist}{\mathrm{dist}}
\newcommand\re{\mathrm{Re}}
\newcommand\im{\mathrm{Im}}
\newcommand\I{\mathrm{i}}
\newcommand{\beqnt}{\begin{equation*}}
\newcommand{\eeqnt}{\end{equation*}}
\newcommand{\set}[2]{\{#1 : #2 \}}

\newcommand{\sgn}{\operatorname{sgn}}

\DeclareMathOperator{\Tr}{Tr}

\DeclareMathOperator{\supp}{supp}
\DeclareMathOperator{\Det}{Det}


\begin{document}

\title[Dirac and Schr\"odinger operators with complex potentials]{Eigenvalue bounds for Dirac and fractional Schr\"odinger operators with complex potentials}

\author{Jean-Claude Cuenin}
\address{Mathematisches Institut, Ludwig-Maximilians-Universit\"at M\"unchen, 80333 Munich, Germany}
\email{cuenin@math.lmu.de}

\begin{abstract}
We prove Lieb-Thirring type bounds for fractional Schr\"odinger operators and Dirac operators with complex-valued potentials. The main new ingredient is a resolvent bound in Schatten spaces for the unperturbed operator, in the spirit of Frank and Sabin \cite{FrankSabin14}. 

\smallskip
\noindent \textbf{Keywords.} Fractional Schr\"odinger operators; Dirac operators; Lieb-Thirring type bounds; complex potentials.
\end{abstract}

\maketitle


\section{Introduction}
Many recent publications have dealt with eigenvalue bounds for non-selfadjoint perturbations of operators from mathematical physics, see for example
\cite{AAD01,FrLaLiSe06,LaSa09,BorichevEtAl2009,Sa10,Frank11,DemuthEtAl2009,DemuthEtAl2013,Sambou2014,Dubuisson2014,Dubuisson2014frac,FrankSabin14}. 
One of the most common approaches, initiated in \cite{BorichevEtAl2009}, is to regard the eigenvalues as zeros of a holomorphic function (a regularized determinant) and then use function-theoretic arguments related to Jensen's identity to estimate sums of eigenvalues. Taking the Schr\"odinger operator $-\Delta+V$ as an example, eigenvalues could a priori accumulate at any point in $[0,\infty)$.
A typical result of \cite{FrankSabin14} is that for any sequence $\{z_j\}_j$ of eigenvalues accumulating to a point $\lambda\in (0,\infty)$ it holds 
that $\{\dist(z_j,[0,\infty))\}_j\in l^{1}$, provided $V\in L^q(\R^d)$, with $d/2<q\leq (d+1)/2$. This is an improvement of earlier results of \cite{DemuthEtAl2009,DemuthEtAl2013} where it was shown that such a sequence is in $l^{q+\epsilon}$ for certain $q>d/2$. Additionally, the latter estimates require a lower bound on the real part of $V$ or an estimate of the numerical range of $-\Delta+V$.
In this paper we prove that $\{\dist(z_j,[0,\infty))\}_j\in l^{1}$ for eigenvalues of $H_0+V$ where $H_0$ is either a fractional Laplacian, fractional Bessel or a (massless or massive) Dirac operator. This restriction is somewhat arbitrary, but we found our particular choice to be a reasonable generalization of the results of \cite[Sect. 4 and 6]{FrankSabin14}. With this in mind, we have made an effort to state the key estimates, Lemma~\ref{lemma resolvent estimate low frequency} and especially Lemma~\ref{lemma schatten estimate low frequency}, in greater generality than needed. The techniques for proving these estimates are standard in harmonic analysis (complex interpolation, stationary phase), and the proofs bear close resemblance to the proof of the Stein-Tomas restriction theorem. The two lemmas are used to prove Theorem \ref{theorem uniform Sobolev inequality fractional Laplacian} and Theorem \ref{main theorem schatten norms}. The first is an analogue of the uniform Sobolev inequality due to Kenig, Ruiz and Sogge \cite[Thm. 2.3]{KRS} stating that for\footnote{In fact, both \eqref{KRS} and \eqref{FrankSabin Schatten bound} hold also in $d=2$ if one excludes an endpoint. Moreover, the general case $|z|>0$ is obtained by scaling.} $d\geq 3$ and $2d/(d+2)\leq p\leq 2(d+1)/(d+3)$, and for all $z\in\C$ with $|z|= 1$, the estimate
\begin{align}\label{KRS}
\|(-\Delta-z)^{-1}\|_{L^p(\R^d)\to L^{p'}(\R^d)}\leq C
\end{align}
holds with a constant independent of $z$. 
The second lemma is an analogue of the closely related uniform resolvent estimate in Schatten spaces due to Frank and Sabin \cite[Thm. 12]{FrankSabin14}, which we state for $d\geq 3$: For $d/2\leq q\leq (d+1)/2$ and for all $z\in\C$ with $|z|=1$, one has the estimate
\begin{align}\label{FrankSabin Schatten bound}
\|W_1(-\Delta-z)^{-1}W_2\|_{\mathfrak{S}^{q(d-1)/(d-q)}(L^2(\R^d))}\leq C\|W_1\|_{L^{2q}(\R^d)}\|W_2\|_{L^{2q}(\R^d)}
\end{align}
with a constant independent of $z$. Here, $\mathfrak{S}^p(L^2(X))$, $1\leq p<\infty$, denote the Schatten spaces
\begin{align*}
\mathfrak{S}^p(L^2(X))=\set{T\in\mathfrak{S}^{\infty}(L^2(X))}{\Tr(T^*T)^{p/2}<\infty},
\end{align*}
and $\mathfrak{S}^{\infty}(L^2(X)$ is the space of compact operators on $L^2(X)$ with the operator norm topology.
For $1\leq p<q<\infty$, we have the continuous embeddings
\begin{align}\label{embeddings Schatten spaces}
\mathfrak{S}^p(L^2(X))\subset \mathfrak{S}^q(L^2(X))\subset \mathfrak{S}^{\infty}(L^2(X)).
\end{align}
In view of \eqref{embeddings Schatten spaces}, the bound \eqref{FrankSabin Schatten bound} implies \eqref{KRS} by a duality argument. At the same time, the proof of \eqref{FrankSabin Schatten bound} relies on the kernel bounds of \cite{KRS} for complex powers of the resolvent. These bounds were obtained by using an explicit formula for the Fourier transform of the symbol. In our case, no explicit formulas are available, and the proof has to be more flexible. Here, we rely on a factorization of the symbol of the resolvent that is often encountered in microlocal analysis. In particular, in the context of $L^p$ estimates it was used in \cite{MR1168960}. Although the method is essentially known, our resolvent estimates seem to be new. The range of permitted exponents in Theorem \ref{main theorem schatten norms} is larger than what is stated in \cite{FrankSabin14}, and it settles the question whether \ref{FrankSabin Schatten bound} can be extended from $q\geq 4/3$ down to $q>1$ in two dimensions (see Remark \ref{rem. question remark 8 in FS}). This remark also yields an improvement in the range of $q$ in Theorems 13 and 17 of \cite{FrankSabin14} in dimension two.

The outline of the paper is as follows. In Section 2 we state our assumptions and the main result (Theorem \ref{Main theorem}) about the accumulation rate of complex eigenvalues to a non-critical point $\lambda\in\sigma(H_0)$ and provide a short proof for the simplified case of compactly supported potentials. In Section 3 we prove uniform $L^p\to L^{p'}$ resolvent estimates for the free resolvent of the fractional Schr\"odinger and Dirac operator (Theorem \ref{theorem uniform Sobolev inequality fractional Laplacian}), both with and without mass. In Section 4 we prove analogous resolvent estimates in Schatten spaces (Theorem \ref{main theorem schatten norms}), which is our main technical result. In Section 5 we prove Theorem \ref{Main theorem} by combining the resolvent estimates with function-theoretic arguments. Section 6 contains new results about the location and the distribution of discrete eigenvalues of $H_0+V$. The paper concludes with an appendix that contains some technical details related to the proof of Lemma \ref{lemma schatten estimate low frequency} and a convexity lemma that we need to interpolate estimates for the sandwiched resolvent in Schatten spaces.

Notation: In the following we set $X=\R^d$ or $X=\R^d\otimes\C^n$ (in the case of the Dirac operator). In the latter case, the norm in $\C^n$ is fixed throughout the article. The resolvent set of a selfadjoint operator $H_0$ on $L^2(X)$ is denoted by $\rho(H_0)$.  For $z\in \rho(H_0)$ we write $R_0(z)=(H_0-z)^{-1}$ for the resolvent operator. The discrete spectrum $\sigma_{\rm d}(H)$ of a closed operator $H$ is the set of isolated eigenvalues of $H$ of finite algebraic multiplicity. The spaces $L^p(X)\cap L^q(X)$ and $L^p(X)+L^q(X)$ are equipped with the norms $\|f\|_{L^p\cap L^q}=\max\{\|f\|_{L^p},\|f\|_{L^q}\}$ and $\|f\|_{L^p+ L^q}=\inf_{f=f_1+f_2}(\|f_1\|_{L^p}+\|f_2\|_{L^q})$, respectively. Here and in the following, we omit the space $X$ from the notation of the $L^p$ norms. 
For a function $T:\R^d\to\R$ we denote the corresponding Fourier multiplier by $T(D)$, i.e.\ $T(D)f:=\mathcal{F}^{-1}(T\widehat{f})$, where $\mathcal{F}$ denotes the Fourier transformation. We also use the symbol $\mathcal{F}_1$ to denote the partial Fourier transform in the first variable. Constants are generic and are denoted by $C$ or by $C(K)$ if their dependence on a parameter $K$ is emphasized. The dependence on the dimension and on the exponents of the $L^p$ spaces involved will usually not be made explicit. We occasionally use the notation $a\approx b$ to indicate that $C^{-1}b\leq a\leq C b$ for some constant $C>0$.
A smooth compactly supported function $\chi$ will sometimes be referred to simply as a bump function.

\section{Main results}

\begin{assumption}\label{unperturbed}
Let $d\geq 2$. The unperturbed Hamiltonian is $H_0=T(D)$ on $L^2(X)$, where $T(D)$ is one of the following kinetic energies
\begin{itemize}
\item The fractional Laplacian $T(D)=(-\Delta)^{s/2}$,
\item The fractional Bessel operator $T(D)=(1-\Delta)^{s/2}-1$, 
\item The massless Dirac operator $T(D)=\sum_{j=1}^d\alpha_j(-\I\nabla_j)\equiv \mathcal{D}_0$, 
\item The massive Dirac operator\footnote{Here, $\alpha_j$, $j=1,\ldots,n$ and $\beta=\alpha_{n+1}$ are Hermitian matrices satisfying the Clifford relations $\alpha_i\alpha_j+\alpha_j\alpha_i=2\delta_{ij}I_n$, where $n$ may be chosen as $n=2^{d/2}$ if $d$ is even and $n=2^{(d+1)/2}$ if $d$ is odd. We will suppress the identity $I_n$ in the following.} $T(D)=\sum_{j=1}^d\alpha_j(-\I\nabla_j)+\beta\equiv \mathcal{D}_1$.
\end{itemize}
We denote by $\Lambda_c(H_0)$ the set of critical values of $T(\cdot)$.
For the Dirac operators $\mathcal{D}_0$ and $\mathcal{D}_1$ we consider the the symbol corresponding to their eigenvalues,
$\lambda_{\pm}(\xi)=\pm|\xi|$ and $\lambda_{\pm}(\xi)=\pm(1+|\xi|^2)^{1/2}$, respectively. Hence,
\begin{align*}
\Lambda_c((-\Delta)^{s/2})=
\begin{cases}
\{0\}\quad&\mbox{if } s>1,\\
\emptyset \quad&\mbox{if } s\leq 1,
\end{cases}
\quad \Lambda_c((1-\Delta)^{s/2}-1)=\{0\},
\end{align*}
and 
\begin{align*}
\Lambda_c(\mathcal{D}_0)=\emptyset,
\quad \Lambda_c(\mathcal{D}_1)&=\{1,-1\}.
\end{align*}
We adopt the convention that $s=1$ when $H_0=\mathcal{D}_0$ or $H_0=\mathcal{D}_1$. To limit case-by-case arguments, we will assume that $0<s<d$. 
\end{assumption}

\begin{assumption}\label{potential}
Let $0<s<d$ be fixed, and let $V$ be a complex-valued\footnote{In the case where $H_0$ is a Dirac operator ($\mathcal{D}_0$ or $\mathcal{D}_1$) we allow $V$ to be a (generally non-hermitian) matrix-valued potential.} potential. Assume $s$ and $V$ satisfy one of the following assumptions.
\begin{itemize}
\item[a)] $s\geq 2d/(d+1)$ and $V\in L^q(X)$, where $d/s\leq q\leq (d+1)/2$. 
\item[b)] $s< 2d/(d+1)$ and $V\in L^{d/s}(X)\cap L^{(d+1)/2}(X)$.
\end{itemize}
\end{assumption}

\begin{theorem}\label{Main theorem}
Let $H_0=T(D)$, and let $V$ be a complex-valued potential such that Assumptions \ref{unperturbed}--\ref{potential} hold. If $K$ is a compact subset of $\C\setminus \Lambda_c(H_0)$, then 
\begin{align}\label{sum of eigenvalues converges}
\sum_{z\in \sigma_{\rm d}(H_0+V)\cap K}\dist(z,\sigma(H_0))\leq C(K,V).
\end{align}
In the above sum, each eigenvalue $z$ is counted according to its algebraic multiplicity.
Moreover, there exists $C(K)$ such that if $\|V\|\leq C(K)$, then $\sigma(H_0+V)\cap K=\emptyset$. Here, 
\begin{align}\label{def. of ||V||}
\|V\|:=\begin{cases}
\|V\|_{L^q}\quad&\mbox{in case a)},\\
\|V\|_{L^{d/s}\cap L^{(d+1)/2}}\quad&\mbox{in case b)}.
\end{cases}
\end{align}
\end{theorem}

\begin{remark}
In case a), when $d/s<q\leq (d+1)/2$, there is an ''effective'' bound in terms of the $L^q$-norm of $V$, i.e.\ the right hand side of \eqref{sum of eigenvalues converges} may be replaced by $C(K,\|V\|_{L^q})$, see Theorem \ref{thm eigenvalue sums big s} in Section \ref{section further results}. There we also prove versions of \eqref{sum of eigenvalues converges} where the sum is taken over all eigenvalues, not only those in $K$. The price to pay for this generalization is that one has to insert a weight which may become zero at the critical values $\Lambda_c(H_0)$ or at infinity.
\end{remark}
 
The proof of this theorem consists of two parts: An abstract theorem based on a result of \cite{BorichevEtAl2009} in complex analysis and  uniform resolvent estimates. The latter will be needed to handle the potentials appearing in the theorem. If the potential $V$ satisfies much stronger assumption (for example, $V$ compactly supported), then the uniform resolvent estimates are much easier. To illustrate this, we give a proof of Theorem \ref{Main theorem} for the following special case: $H_0=(-\Delta)^{s/2}$ with $d/2<s<d$, and $V\in L^{d/s}_{\rm comp}(X)$.

\begin{proof}[Proof of Theorem \ref{Main theorem} (special case)]
We claim that
\begin{align}\label{kernel bound near zero}
|R_0(x-y;z)|\leq C(r)|x-y|^{s-d},\quad |x-y|\leq r,
\end{align}holds for a constant independent of $z\in\rho(H_0)$.
By homogeneity, it suffices to prove \eqref{kernel bound near zero} for $|z|=1$. Let $\chi$ be a radial bump function supported in $\{1/2\leq |\xi|\leq 3/2\}$ and equal to $1$ in $\{3/4\leq |\xi|\leq 5/4\}$. Then the estimate \eqref{kernel bound near zero} holds for $(1-\chi^2(D))R_0(z)$ by standard estimates, see e.g.\ \cite[Prop. VI.4.1]{Stein1993}. For $\chi^2(D)R_0(z)$ we get an $\mathcal{O}(1)$ kernel bound by comparison with the Hilbert transform (see \eqref{Fourier transform of Hilbert transform}).
Assume that $\supp(V)\subset B(0,R/2)$. Then it follows from \eqref{kernel bound near zero} and the Hardy-Littlewood-Sobolev inequality (recall $d/2<s<d$) that
\begin{align}\label{Hilbert Schmidt bound compactly supported V}
\||V|^{1/2}R_0(z)V^{1/2}\|_{\mathfrak{S}^2}^2\leq C(r)^2\int\int\frac{|V(x)||V(y)|}{|x-y|^{2(s-d)}}\rd x\rd y\leq C\|V\|_{L^{d/s}}^2.
\end{align}
The second claim of the theorem follows by a straightforward application of the Birman-Schwinger principle: Suppose $z\in\rho(H_0)$ is an eigenvalue of $H_0+V$. Then there exists $f\in L^2(X)$ such that $-Vf=(H_0-z)f$. Applying $R_0(z)$ to both sides, multiplying by $|V|^{1/2}$, and then setting $|V|^{1/2}f=g$, we obtain 
\begin{align*}
-g=|V|^{1/2}R_0(z)V^{1/2}g,
\end{align*}
where $V^{1/2}=|V|^{1/2}\sgn(V)$ (here, $\sgn(V)$ is the complex signum function).
Clearly, $g\in L^2(X)$ again, and hence the operator $|V|^{1/2}R_0(z)V^{1/2}$ has an eigenvalue $-1$. But this means that its norm is at least $1$. Since the norm of an operator is bounded from above by its Hilbert-Schmidt norm, \eqref{Hilbert Schmidt bound compactly supported V} tells us that $1\leq C\|V\|_{L^{d/s}}^2$. Therefore, a necessary condition for the existence of an eigenvalue in $\rho(H_0)$ is that $\|V\|_{L^{d/s}}$ is bigger than some positive constant.\footnote{The case where the eigenvalue can be embedded in $\sigma(H_0)$ follows from \cite[Proposition 3.1]{FrankSimon2015}.} We turn to the proof of \eqref{sum of eigenvalues converges}. By the previous argument (Birman-Schwinger principle) and the theory of infinite determinants (see e.g.\ \cite[Thm. 9.2]{Simon2005}) it follows that the eigenvalues of $H_0+V$ in $\rho(H_0)$ coincide with the zeros of the (regularized) determinant
\begin{align*}
h(z):=\Det_2(I+|V|^{1/2}R_0(z)V^{1/2}),\quad z\in\rho(H_0).
\end{align*}
This is an analytic function, and there is the well-known bound 
\begin{align}\label{bound for determinant}
\log|h(z)|\leq \frac{1}{2}\||V|^{1/2}R_0(z)V^{1/2}\|_{\mathfrak{S}^2}^2.
\end{align}
From \eqref{Hilbert Schmidt bound compactly supported V} and \eqref{bound for determinant} it thus follows that $h$ is bounded. As noted in \cite{FrankSabin14}, one can apply a Jensen-type inequality for the upper half-plane $\C^+$ to the map $w\mapsto h(w^2)$, 
giving
\begin{align*}
\sum_{h(z)=0}\frac{\im\sqrt{z}}{1+|z|}<\infty.
\end{align*}
When we restrict $z$ to a compact subset $K$ of $\C\setminus\{0\}$, this sum is comparable to the one in \eqref{sum of eigenvalues converges}.
\end{proof}

\section{Uniform $L^p\to L^{p'}$ resolvent estimates}

\begin{theorem}\label{theorem uniform Sobolev inequality fractional Laplacian}
Let $H_0=T(D)$ where $T(D)$ is one of the operators in Assumption~\ref{unperturbed}, and let $K$ be a compact subset of $\C\setminus \Lambda_c(H_0)$. Then the following estimates hold for all $z\in K\cap\rho(H_0)$.
\begin{itemize}
\item[a)] If $d>s\geq 2d/(d+1)$ and $2d/(d+s)\leq p\leq 2(d+1)/(d+3)$, then for all $f\in C_0^{\infty}(X)$, we have that
\begin{align}\label{uniform Sobolev inequality fractional Laplacian big s}
\|R_0(z)f\|_{L^{p'}}\leq C(K)\|f\|_{L^p}.
\end{align}
\item[b)] If $s<2d/(d+1)$, then for all $f\in C_0^{\infty}(X)$, we have that
\begin{align}\label{uniform Sobolev inequality fractional Laplacian small s}
\|R_0(z)f\|_{L^{\frac{2d}{d-s}}+L^{\frac{2(d+1)}{d-1}}}\leq C(K)\|f\|_{L^{\frac{2d}{d+s}}\cap L^{\frac{2(d+1)}{d+3}}}.
\end{align}
\end{itemize}
\end{theorem}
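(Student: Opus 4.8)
The plan is to decouple the analysis into a high-frequency part and a low-frequency part, the latter being the genuinely hard part that Lemma \ref{lemma resolvent estimate low frequency} is designed to handle. Fix a compact set $K\subset\C\setminus\Lambda_c(H_0)$ and, by the homogeneity/scaling properties recorded in Assumption \ref{unperturbed}, reduce to a normalized situation (e.g.\ $z$ staying in a fixed annulus away from the critical values). Choose a smooth cutoff $\chi$ supported near the characteristic set $\{\xi : T(\xi)=\re z\}$, which for $z\in K$ is a compact, smooth hypersurface with nonvanishing Gaussian curvature (here is where we use that $z$ avoids the critical values $\Lambda_c(H_0)$, and where we use the convention $s=1$ for the Dirac cases so that all four operators are treated uniformly). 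Then write $R_0(z)=\chi(D)R_0(z)+(1-\chi(D))R_0(z)$.

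For the elliptic part $(1-\chi(D))R_0(z)$, the symbol $(T(\xi)-z)^{-1}$ is smooth and decays like $|\xi|^{-s}$ (or like $|\xi|^{-1}$ in the Dirac case with the appropriate reading of $s$), so this piece maps $L^p\to L^{p'}$ by the Hardy--Littlewood--Sobolev inequality exactly as in the special-case proof above, using the pointwise kernel bound $|x-y|^{s-d}$ near the diagonal and rapid decay away from it; the relevant exponent constraint is $1/p-1/p'=s/d$, which is compatible with the stated ranges of $p$. In case b), where $s<2d/(d+1)$, the two endpoint spaces $L^{2d/(d+s)}$ and $L^{2(d+1)/(d+3)}$ are genuinely different and the elliptic part lands in $L^{2d/(d-s)}$ via HLS with the Sobolev exponent, which is one of the two target spaces in \eqref{uniform Sobolev inequality fractional Laplacian small s}.

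For the oscillatory part $\chi(D)R_0(z)$, the plan is to invoke Lemma \ref{lemma resolvent estimate low frequency} (the Stein--Tomas-type estimate stated in greater generality earlier), which gives uniform-in-$z$ boundedness $L^{2(d+1)/(d+3)}\to L^{2(d+1)/(d-1)}$ for the piece localized to the annulus around the curved characteristic variety; this is the restriction-theorem exponent and accounts for the second target space $L^{2(d+1)/(d-1)}$ in case b) and the right endpoint $p=2(d+1)/(d+3)$ in case a). The full range in case a) then follows by interpolating this Stein--Tomas endpoint against the HLS-type estimate at $p=2d/(d+s)$ (complex interpolation of the analytic family $z\mapsto\chi(D)R_0(z)$, using that the constants are uniform on $K$). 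The main obstacle is precisely the oscillatory estimate, i.e.\ establishing that $\chi(D)R_0(z)$ obeys the Stein--Tomas bound uniformly as $z$ ranges over $K$ and as $\re z$ moves the characteristic surface around: one needs the curvature of $\{T(\xi)=\re z\}$ to be bounded below uniformly for $z\in K$, control of the $\im z$ direction (which only improves matters, since it moves the pole off the real axis), and stationary-phase estimates for the associated oscillatory integral operator that are uniform in these parameters — all of which is the content of Lemmas \ref{lemma resolvent estimate low frequency} and \ref{lemma schatten estimate low frequency}, so the proof here is mainly a matter of assembling those pieces and checking the interpolation bookkeeping.
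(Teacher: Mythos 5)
Your overall strategy matches the paper's: split $R_0(z)=\chi(D)R_0(z)+(1-\chi(D))R_0(z)$ with $\chi$ a cutoff near the characteristic surface, invoke Lemma~\ref{lemma resolvent estimate low frequency} for the low-frequency piece, and treat the high-frequency piece as a smooth elliptic multiplier. For the high-frequency part you propose a kernel bound $|x-y|^{s-d}$ plus HLS, whereas the paper takes a slicker $L^2$ route (a Plancherel bound $H^{-s/2}\to H^{s/2}$ followed by Sobolev embedding); these are equivalent in substance. Two points need attention, one minor and one genuine.

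\emph{Minor.} The reference to ``complex interpolation of the analytic family $z\mapsto\chi(D)R_0(z)$'' for case a) is off the mark. Lemma~\ref{lemma resolvent estimate low frequency} already yields the bound for $\chi(D)R_0(z)$ on the \emph{whole} range $1\le p\le 2(d+1)/(d+3)$, so no interpolation is required there. What needs to be filled in is the range for the elliptic part: HLS gives exactly $1/p-1/p'=s/d$, i.e.\ $p=2d/(d+s)$, but for $2d/(d+s)<p\le 2(d+1)/(d+3)$ one has $1/p-1/p'<s/d$, and the bound then follows from Young's inequality (the singular piece $|x|^{s-d}\mathbf 1_{\{|x|\le 1\}}$ lies in $L^r$ for every $r<d/(d-s)$) or by Riesz--Thorin interpolation with the trivial $L^2\to L^2$ bound for the high-frequency operator. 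No analytic family in $z$ is involved.

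\emph{Genuine gap.} The Dirac cases are not obtained ``uniformly'' by just setting $s=1$. For $T(D)=\mathcal D_0$ or $\mathcal D_1$ the symbol $T(\xi)$ is matrix-valued, so $\{\xi:T(\xi)=\re z\}$ is not a hypersurface and Lemma~\ref{lemma resolvent estimate low frequency} (stated for scalar $T\in C^2(\R^d)$) does not apply directly. The paper reduces to the scalar case via the algebraic identity
\begin{align*}
(\mathcal D_0-z)^{-1}=(\mathcal D_0+z)(-\Delta-z^2)^{-1},\qquad
(\mathcal D_1-z)^{-1}=(\mathcal D_1+z)(-\Delta-(z^2-1))^{-1},
\end{align*}
so that the low-frequency analysis is carried out for the scalar multiplier $(|\xi|^2-w)^{-1}$ with $w=z^2$ (resp.\ $w=z^2-1$), and the first-order factor $\mathcal D_0+z$ (resp.\ $\mathcal D_1+z$) is absorbed into the compactly supported cutoff $\chi(D)$ since the relevant frequencies are bounded. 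This step is needed and should be included; without it, the argument does not cover the last two operators in Assumption~\ref{unperturbed}.
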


\begin{corollary}\label{corollary uniform Sobolev inequality fractional Laplacian}
Let $V$ be a complex-valued potential satisfying Assumption \ref{potential}, and let $K$ be a compact subset of $\C\setminus \Lambda_c(H_0)$. Then the following estimates hold for all $z\in K\cap\rho(H_0)$.
\begin{align}\label{Birman-Schwinger bound L2L2}
\left\||V|^{1/2}R_0(z)|V|^{1/2}\right\|_{L^2\to L^2}\leq C(K,\|V\|).
\end{align}
\end{corollary}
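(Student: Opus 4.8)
The plan is to deduce the corollary from Theorem~\ref{theorem uniform Sobolev inequality fractional Laplacian} by factoring $|V|^{1/2}R_0(z)V^{1/2}$ through the Lebesgue spaces appearing there and controlling the two multiplication operators by Hölder's inequality, uniformly for $z\in K$. Throughout, $V^{1/2}$ denotes $|V|^{1/2}U$, where $V=U|V|$ is the polar decomposition (in the matrix-valued Dirac case), so that $V=V^{1/2}\cdot|V|^{1/2}$ and both factors have pointwise absolute value $|V|^{1/2}$; multiplication by $|V|^{1/2}$ then has operator norm $\|V\|^{1/2}_{L^r}$ as a map $L^{p}\to L^{2}$ whenever $1/2=1/(2r)+1/p$ in the scalar sense, and similarly for $V^{1/2}$.

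In case a), I would set $p=2q/(q+1)$, i.e.\ $1/p=1/2+1/(2q)$ and $1/p'=1/2-1/(2q)$. A direct computation shows that the hypotheses $d/s\le q\le (d+1)/2$ are equivalent to $2d/(d+s)\le p\le 2(d+1)/(d+3)$ (and the condition $s\ge 2d/(d+1)$ is exactly what makes this $p$-range nonempty), so Theorem~\ref{theorem uniform Sobolev inequality fractional Laplacian}~a) gives $\|R_0(z)\|_{L^p\to L^{p'}}\le C(K)$ for $z\in K$. Hölder's inequality with exponents $2q$ and $p'$ bounds multiplication by $|V|^{1/2}$ from $L^{p'}$ to $L^2$ by $\|V\|_{L^q}^{1/2}$, and multiplication by $V^{1/2}$ from $L^2$ to $L^p$ by $\|V\|_{L^q}^{1/2}$. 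Composing the three maps yields \eqref{Birman-Schwinger bound L2L2} with the effective constant $C(K,\|V\|)=C(K)\|V\|_{L^q}$.

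In case b), I would use that the dual of $L^{2d/(d-s)}+L^{2(d+1)/(d-1)}$ is $L^{2d/(d+s)}\cap L^{2(d+1)/(d+3)}$ and conversely, since $(2d/(d-s))'=2d/(d+s)$ and $(2(d+1)/(d-1))'=2(d+1)/(d+3)$. Hölder's inequality with exponents $2d/s$, respectively $d+1$, shows that multiplication by $V^{1/2}$ maps $L^2$ boundedly into each of $L^{2d/(d+s)}$ and $L^{2(d+1)/(d+3)}$, hence into their intersection, with norm at most $\|V\|_{L^{d/s}}^{1/2}+\|V\|_{L^{(d+1)/2}}^{1/2}$; dualizing, multiplication by $|V|^{1/2}$ maps $L^{2d/(d-s)}+L^{2(d+1)/(d-1)}$ to $L^2$ with the same bound. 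Sandwiching the estimate of Theorem~\ref{theorem uniform Sobolev inequality fractional Laplacian}~b) between these two multiplication operators gives \eqref{Birman-Schwinger bound L2L2}.

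The argument is essentially mechanical once the exponents are matched. The only points needing care are the elementary arithmetic identifying the admissible range of $p$ in case a) with the conditions on $q$, and the (standard) duality between sums and intersections of $L^p$-spaces used in case b); I do not expect any substantial obstacle.
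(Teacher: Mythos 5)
Your proposal is correct and follows essentially the same route as the paper's own proof: factor $|V|^{1/2}R_0(z)V^{1/2}$ through the $L^p$-spaces of Theorem~\ref{theorem uniform Sobolev inequality fractional Laplacian} and control the two multiplication operators by H\"older's inequality. The paper writes this as a duality pairing $|\langle R_0(z)V^{1/2}f,|V|^{1/2}g\rangle|\le \|R_0(z)\|_{X\to X^*}\|V^{1/2}f\|_X\||V|^{1/2}g\|_X$ and only spells out case~b), noting that a) is ``similar and easier''; you write out the factorization $L^2\to X\to X^*\to L^2$ explicitly and work out both cases, including the exponent arithmetic $p=2q/(q+1)\Leftrightarrow q$, but the substance is identical. (One cosmetic slip: for the matrix-valued case you want $V^{1/2}=U|V|^{1/2}$, not $|V|^{1/2}U$, so that $V^{1/2}|V|^{1/2}=U|V|=V$; this does not affect the argument since only the pointwise bound $|V^{1/2}(x)|=|V(x)|^{1/2}$ is used.)
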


\begin{proof}
We prove the case where $V$ satisfies Assumption \ref{potential} b). The proof of case~a) is similar and easier still. Denote $X=L^{\frac{2d}{d+s}}\cap L^{\frac{2(d+1)}{d+3}}$ and $X^*=L^{\frac{2d}{d-s}}+L^{\frac{2(d+1)}{d-1}}$ its dual.
Theorem \ref{theorem uniform Sobolev inequality fractional Laplacian} yields that for $f,g\in C_0^{\infty}(X)$,
\begin{align*}
|\langle R_0(z)|V|^{1/2}f,|V|^{1/2}g\rangle|&\leq \|R_0(z)|V|^{1/2}f\|_{X^*}\||V|^{1/2}g\|_{X}\\
&\leq C(K)\||V|^{1/2}f\|_{X}\||V|^{1/2}g\|_{X}\\
&\leq C(K)\||V|^{1/2}\|^2_{L^2\to X}\|f\|_{L^2}\|g\|_{L^2}.
\end{align*}
Taking the supremum over normalized functions $f,g\in L^2$, and using H\"older's inequality, we get
\begin{align*}
\left\||V|^{1/2}R_0(z)|V|^{1/2}\right\|_{L^2\to L^2}\leq C(K)\max\{\|V\|_{L^{d/s}},\|V\|_{L^{(d+1)/2}}\}.
\end{align*} 
\end{proof}

\begin{lemma}\label{lemma resolvent estimate low frequency}
Let $T\in C^2(\R^d,\R)$ and let $K$ be a compact subset of $\C\setminus\Lambda_c(T)$. Assume that the Gaussian curvature of the level sets $S_{\lambda}=\set{\xi\in\R^d}{T(\xi)=\lambda}$ never vanishes for $\lambda\in K\cap\R$.
If $\chi$ is a smooth compactly supported function, then for $1\leq p\leq 2(d+1)/(d+3)$ and for all $z\in K\cap\rho(H_0)$, we have the estimate
\begin{align}\label{inequality lemma low frequency}
\|\chi^2(D)R_0(z)\|_{L^p\to L^{p'}}\leq C(K,T,\chi).
\end{align}
\end{lemma}

\begin{proof}
We write $z=\lambda+\I\epsilon$. By a limiting argument, we may assume $\epsilon\neq 0$ for the rest of the proof.
Moreover, we may suppose without loss of generality that $\chi$ is supported in a small neighborhood of the origin, $T(0)=0$ and $\nabla_{\xi}T(0)=\rho e_1$ for some $\rho>0$; here $e_1$ is the unit vector in the $\xi_1$-direction. Otherwise, we use a partition of unity and a linear change of coordinates.
By the implicit function theorem, for $\xi$ in a neighborhood of $\supp\chi$ and $\lambda$ in a neighborhood of $0$, we have
\begin{align}\label{implicit function theorem}
T(\xi)-\lambda=e(\xi,\lambda)(\xi_1-h(\xi',\lambda))
\end{align}
where $\xi=(\xi_1,\xi')\in\R\times\R^{d-1}$, and $h$, $e$ are real-valued smooth functions, $e$ being bounded away from zero. Locally, $e(\xi,\lambda)$ is given by the expression
\begin{align}\label{exilambda}
e(\xi,\lambda)=\left(\int_0^1T_{\xi_1}(t\xi_1+(1-t)h(\xi',\lambda),\xi')\rd t\right).
\end{align}
We extend $e(\xi,\lambda)$ and $h(\xi,\lambda)$ to real-valued functions in $C_b^{\infty}(\R^d)$ with $C^k$-seminorms independent of $\lambda\in\R$ and such that $e_2\geq e(\xi,\lambda)\geq e_1>0$. Let us set $b(\xi,\lambda):=e(\xi,\lambda)^{-1}$. 
Since, by Young's inequality, $(b\chi)(D,\lambda)$ is bounded on $L^p$ for all $p\in[1,\infty]$, it is sufficient to prove
\begin{align}\label{inequality pricipal type}
\sup_{z\in K\cap\rho(H_0)}\|\chi(D)(D_1-h(D',\lambda)-\I\epsilon b(D,\lambda))^{-1}\|_{L^p\to L^{p'}}<\infty.
\end{align}
Denote by $K_{\lambda,\epsilon}(\cdot)$ the inverse Fourier transform of $\xi\mapsto \chi(\xi)(\xi_1-h(\xi',\lambda)-\I \epsilon b(\xi,z))^{-1}$. Then
\begin{align*}
\chi(D)(D_1-h(D',\lambda)-\I\epsilon b(D,\lambda))^{-1}g(x)=\int_{-\infty}^{\infty}K_{\lambda,\epsilon}(x_1-y_1,\cdot)*g(\cdot,y_1)(x')\rd y_1
\end{align*}
We claim that
\begin{align}
\|K_{\lambda,\epsilon}(x_1-y_1,\cdot)*g\|_{L^{\infty}(\R^{d-1})}&\leq C(1+|x_1-y_1|)^{-\frac{d-1}{2}}\|g\|_{L^1(\R^{d-1})},\label{L1-Linfty}\\
\|K_{\lambda,\epsilon}(x_1-y_1,\cdot)*g\|_{L^{2}(\R^{d-1})}&\leq C\|g\|_{L^2(\R^{d-1})}\label{L2-L2}
\end{align}
with $C$ independent of $\lambda$, $\epsilon$.
Interpolating between \eqref{L1-Linfty} and \eqref{L2-L2}, we get
\begin{align*}
\|K_{\lambda,\epsilon}(x_1-y_1,\cdot)*g\|_{L^{p^{'}}(\R^{d-1})}&\leq C(1+|x_1-y_1|)^{-(d-1)\left(\frac{1}{p}-\frac{1}{2}\right)}\|g\|_{L^p(\R^{d-1})}.
\end{align*}
A standard argument using Minkowski's inequality for integrals and fractional integration in one dimension then yields for $p=2(d+1)/(d+3)$,
\begin{align*}
&\|\chi(D)(D_1-h(D',\lambda)-\I \epsilon b(D,\lambda))^{-1}g\|_{L^{p'}(\R^d)}\\
&\leq \left\|\int_{\R}\left\|K_{\lambda,\epsilon}(x_1-y_1,\cdot)*g(y_1,\cdot)\right\|_{L_{x'}^{p'}(\R^{d-1})}\rd y_1\right\|_{L_{x_1}^{p'}(\R)}\\
&\leq C\left\|\int_{\R}|x_1-y_1|^{-(d-1)\left(\frac{1}{p}-\frac{1}{2}\right)} \|g(y_1,\cdot)\|_{L^{p}_{x'}(\R^{d-1})}\rd y_1\right\|_{L_{x_1}^{p'}(\R)}\\
&\leq C \left\|\|g(x_1,\cdot)\|_{L^p_{x'}(\R^{d-1})}\right\|_{L_{x_1}^{p}(\R)}
=C\|g\|_{L^p(\R^d)}.
\end{align*}
This proves \eqref{inequality pricipal type} for $p=2(d+1)/(d+3)$. Interpolating with the $L^1\to L^{\infty}$-bound (which follows from \eqref{Fourier transform of Hilbert transform} below), the claim \eqref{inequality pricipal type} follows.

It remains to prove inequalities \eqref{L1-Linfty}--\eqref{L2-L2}. The convolution kernel $K_{\lambda,\epsilon}$ is given by
\begin{align*}
K_{\lambda,\epsilon}(x_1,x')
=\int_{\R^{d-1}}\e^{2\pi\I[x'\cdot\xi'+x_1h(\xi',\lambda)]}\psi_{\lambda,\epsilon}(\xi',x_1)\rd\xi',
\end{align*}
where
\begin{align*}
\psi_{\lambda,\epsilon}(\xi',x_1)=\int_\R \e^{2\pi\I x_1\xi_1}\chi(\xi_1,\xi')(\xi_1-\I \epsilon b(\xi_1,\xi',\lambda))^{-1}\rd\xi_1
\end{align*}
We claim that for all $\alpha\in\N^d_0$, there are constants $C_{\alpha}>0$ such that
\begin{align}\label{symbol bounds psi}
\sup_{\lambda+\I \epsilon\in K}\sup_{\xi'\in\R^{d-1}}\sup_{x_1\in\R}|\partial_{\xi'}^{\alpha}\psi_{\lambda,\epsilon}(\xi',x_1)|\leq C_{\alpha}.
\end{align}

Inequality \eqref{L2-L2} then follows from Plancherel's theorem, and inequality \eqref{L1-Linfty} follows from stationary phase estimates, see e.g.\ \cite[Prop. VIII.2.6]{Stein1993}. Here, we are using that
\begin{align*}
h(0,\lambda)=0,\quad\nabla_{\xi'}h(0,\lambda)=0,\quad D_{\xi'}^2h(0,\lambda)\quad\mbox{is nondegenerate}.
\end{align*}
The last assertion follows from the assumption that the Gaussian curvature of the level sets $S_{\lambda}$, given by
\begin{align*}
(1+|\nabla_{\xi'}h(\cdot,\lambda)|^2)^{-(d+1)/2}\Det(D_{\xi'}^2h(\cdot,\lambda))
\end{align*}
in local coordinates, never vanishes. By compactness of $K\subset\C\setminus\Lambda_c(T)$, there is a uniform bound from below for all $\lambda\in K\cap\R$.

To prove \eqref{symbol bounds psi}, we note that $\partial_{\xi'}^{\alpha} \chi(\cdot,\xi')\in\mathcal{S}(\R)$ with Schwartz norms bounded independently of $\xi'\in\R^{n-1}$. Assume that $\alpha=0$. By the convolution theorem, we have 
\begin{align*}
|\psi_{\lambda,\epsilon}(\xi',x_1,\epsilon)|\leq \|\mathcal{F}_1^{-1}\chi(\cdot,\xi')\|_{L^1(\R)}\|\mathcal{F}_1^{-1}(\cdot-\I\epsilon b(\cdot,\xi',\lambda))^{-1}\|_{L^{\infty}(\supp(\pi_1\chi))},
\end{align*}
where $\pi_1$ denotes the projection onto the first coordinate. The second (inverse) Fourier transform has to be understood in the sense of tempered distributions. By a change of variables $\xi_1 \to \epsilon\xi_1$, it is sufficient to prove that
\begin{align}\label{Fourier transform of Hilbert transform}
\sup_{\lambda+\I \epsilon\in K}\sup_{\xi'\in\R^{d-1}}\left\|\int_{\R}\e^{2\pi \I x_1\xi_1}(\xi_1-\I b(\epsilon\xi_1,\xi',\lambda))^{-1}\rd\xi_1\right\|_{L^{\infty}_{x_1}(\R)}\leq C.
\end{align}
If we set $u_{\lambda,\epsilon}(\xi_1,\xi'):=(\xi_1-\I b(\epsilon\xi_1,\xi',\lambda))^{-1}$, then \eqref{Fourier transform of Hilbert transform} means that for any $\varphi\in\mathcal{S}(\R)$, we have
\begin{align}\label{Fourier transform of Hilbert transform distributional sense}
\sup_{\lambda+\I \epsilon\in K}\sup_{\xi'\in\R^{d-1}}|\langle u_{\lambda,\epsilon}(\cdot,\xi'),\widehat{\varphi}\rangle|\leq C\|\varphi\|_{L^1(\R)}.
\end{align}
Here, $\langle\cdot,\cdot\rangle$ denotes the duality bracket between $\mathcal{S}'(\R)$ and $\mathcal{S}(\R)$.
Let $\chi_{\leq }\in\mathcal{S}(\R)$ be such that $\widehat{\chi}_{\leq}\in C_c^{\infty}(\R,[0,1])$, $\supp \widehat{\chi}_{\leq}\subset [-2,2]$ and $\widehat{\chi}_{\leq}\equiv 1$ in $[-1,1]$. We set $u_{\leq,\lambda,\epsilon}:=u_{\lambda,\epsilon}\widehat{\chi}_{\leq}\in\mathcal{S}'(\R)$ and $u_{\geq,\lambda,\epsilon}:=u_{\lambda,\epsilon}(1-\widehat{\chi}_{\leq})\in\mathcal{S}'(\R)$. 
Then
\begin{align}\label{first part of proof of Fourier transform of Hilbert transform distributional sense}
|\langle u_{\leq,\lambda,\epsilon},\widehat{\varphi}\rangle|
\leq \|\widehat{\chi}_{\leq}\|_{L^1(\R)}\|(\cdot-\I b(\cdot,\xi',\lambda))^{-1}\|_{L^{\infty}(\R)}\|\widehat{\varphi}\|_{L^{\infty}(\R)}\leq C\|\varphi\|_{L^1(\R)},
\end{align}
with a constant $C$ independent of $\xi'$, $\lambda$ and $\epsilon$. Further, we write
\begin{align*}
\langle u_{\geq,\lambda,\epsilon},\widehat{\varphi}\rangle 
=\langle p.v.\frac{1}{\xi_1},(1-\widehat{\chi}_{\leq})\widehat{\varphi}\rangle
+\langle  u_{\geq,\lambda,\epsilon}-p.v.\frac{1}{\xi_1},(1-\widehat{\chi}_{\leq})\widehat{\varphi}\rangle.
\end{align*}
Since the Hilbert transform $\xi_1\mapsto p.v.1/\xi_1$ has Fourier transform equal to $-\I\pi\sgn(\cdot)$, it follows that
\begin{align*}
|\langle p.v.\frac{1}{\xi_1},(1-\widehat{\chi}_{\leq})\widehat{\varphi}\rangle|\leq \pi(1+\|\chi_{\leq}\|_{L^1(\R)})\|\varphi\|_{L^1(\R)}.
\end{align*}
On the other hand, 
\begin{align*}
|\langle u_{\geq,\lambda,\epsilon}-p.v.\frac{1}{\xi_1},(1-\widehat{\chi}_{\leq})\widehat{\varphi}\rangle|
&\leq \|\widehat{\varphi}\|_{L^{\infty}(\R)}\int_{|\xi_1|\geq 1}\frac{ |b(\epsilon\xi_1,\xi',\lambda)|}{|\xi_1-\I b(\epsilon\xi_1,\xi',\lambda)||\xi_1|}\rd\xi_1\\
&\leq C\|\varphi\|_{L^1(\R)},
\end{align*}
where $C$ is independent of $\xi'$, $\lambda$ and $\epsilon$. This completes the proof of \eqref{Fourier transform of Hilbert transform distributional sense} and hence of \eqref{symbol bounds psi} for $\alpha=0$. 

For $|\alpha|\geq 1$, a dominated convergence argument implies that  
\begin{align*}
\partial_{\xi'}^{\alpha}\psi_{\lambda,\epsilon}(\xi',x_1)=\int_\R \e^{2\pi\I x_1\xi_1}\partial_{\xi'}^{\alpha}\left(\chi(\xi_1,\xi')(\xi_1-\I\epsilon b(\xi_1,\xi',\lambda))^{-1}\right)\rd\xi_1
\end{align*}
By Leibniz' formula,
\begin{align*}
\partial_{\xi'}^{\alpha}\left(\chi(\xi_1,\xi')(\xi_1-\I\epsilon b(\xi_1,\xi',\lambda))^{-1}\right)
=\sum_{\beta\leq \alpha}\begin{pmatrix}
\alpha\\\beta
\end{pmatrix}
\partial_{\xi'}^{\alpha-\beta}\chi(\xi_1,\xi')\partial_{\xi'}^{\beta}(\xi_1-\I\epsilon b(\xi_1,\xi',\lambda))^{-1},
\end{align*}
and by Faà di Bruno's formula,
\begin{align*}
\partial_{\xi'}^{\beta}(\xi_1-\I\epsilon b(\xi_1,\xi',\lambda))^{-1}=\sum_{\beta_1+\ldots+\beta_l=\beta}C_{\beta_1,\ldots,\beta_k}(-\I\epsilon)^l\frac{\partial_{\xi'}^{\beta_1} b(\xi_1,\xi',\lambda)\ldots \partial_{\xi'}^{\beta_l} b(\xi_1,\xi',\lambda)}{(\xi_1-\I\epsilon b(\xi_1,\xi',\lambda))^{l+1}}
\end{align*} 
for some constants $C_{\beta_1,\ldots,\beta_k}$, where the sum is taken over all partitions of $\beta$. 
Since $b(\cdot,\xi',\lambda)\in C_b^{\infty}(\R)$ with $C^k$-seminorms uniform in $\xi'$ and $\lambda$, it remains to prove that for any $\widetilde{\chi}\in C_c^{\infty}(\R)$ and $l\geq 1$, we have
\begin{align*}
\left\|\epsilon^l\int_{\R} \e^{2\pi\I x_1\xi_1}\frac{\widetilde{\chi}(\xi_1)}{(\xi_1-\I\epsilon b(\xi_1,\xi',\lambda))^{1+l}}\rd\xi_1\right\|_{L^{\infty}_{x_1}(\R)}\leq C,
\end{align*}
with $C$ independent of $\xi'$, $\lambda$ and $\epsilon$. Since $| b(\xi_1,\xi',\lambda)|\geq e_2^{-1}>0$, we see by a change of variables $\xi_1\to \epsilon\xi_1$ that the integral is absolutely convergent, with a uniform bound in $\xi'$, $\lambda$ and $\epsilon$.
\end{proof}

\begin{proof}[Proof of Theorem \ref{theorem uniform Sobolev inequality fractional Laplacian}]
We prove case b), case a) being similar and easier. Consider $T(\xi)=|\xi|^s$ or $T(\xi)=(1+|\xi|^2)^{s/2}-1$ first. These are Fourier multipliers corresponding to radial functions. Thus the level sets $S_{\lambda}$ are spheres of size $\mathcal{O}(1)$ for all $\lambda\in K$. In particular, their Gaussian curvature is non-vanishing. Let $\Omega,\Omega'\subset\C$ be open sets such that $K\subset\subset \Omega \subset\subset \Omega'$. Pick a bump function $\chi\in\C^{\infty}_c(\R^d;[0,1])$ such that $\supp(\chi)\subset T^{-1}(\Omega'\cap\R)$ (note that $T$ is proper, i.e.\ the preimage of a compact set under $T$ is compact) and $\chi=1$ on $T^{-1}(\Omega\cap\R)$. 
Lemma~\ref{lemma resolvent estimate low frequency} implies
\begin{align}\label{low frequency bound case b in the proof of LpLp'}
\|\chi^2(D)R_0(z)\|_{L^{\frac{2(d+1)}{d+3}}\to L^{\frac{2(d+1)}{d-1}}}\leq C(K,\chi).
\end{align}
By the choice of $\chi$, we have
\begin{align*}
\int_{\R^d}\frac{|(1-\chi(\xi))|^2}{|T(\xi)-z|^2}(1+|\xi|^2)^{s/2}|\widehat{f}(\xi)|^2\rd\xi\leq \frac{1}{\dist(\partial\Omega,K)^2}\int_{\R^d}(1+|\xi|^2)^{-s/2}|\widehat{f}(\xi)|^2\rd\xi.
\end{align*}
Then
\begin{align}\label{elliptic Hs bound}
\|(1-\chi^2(D))R_0(z)f\|_{H^{s/2}}\leq \frac{1}{\dist(\partial\Omega,K)}\|f\|_{H^{-s/2}}.
\end{align}
Together with the (dual) Sobolev embedding
\begin{align*}
H^{s/2}(\R^d)\hookrightarrow L^{\frac{2d}{d-s}}(\R^d),\quad L^{\frac{2d}{d+s}}(\R^d)\hookrightarrow H^{-s/2}(\R^d),
\end{align*}
the estimate \eqref{elliptic Hs bound} implies that
\begin{align}\label{high frequency bound case b in the proof of LpLp'}
\|(1-\chi^2(D))R_0(z)\|_{L^{\frac{2d}{d+s}}\to L^{\frac{2d}{d-s}}}\leq C(K,\chi).
\end{align}
Let $f=f_1+f_2$ where $f_1=\chi^2(D)f$. Then, by \eqref{low frequency bound case b in the proof of LpLp'} and \eqref{high frequency bound case b in the proof of LpLp'},
\begin{align*}
\|R_0(z)f_1\|_{L^{\frac{2d}{d-s}}}+\|R_0(z)f_2\|_{L^{\frac{2(d+1)}{d-1}}}\leq C(K,\chi)\|f\|_{L^{\frac{2d}{d+s}}\cap L^{\frac{2(d+1)}{d+3}}}.
\end{align*}
This proves the claim in this case.

Now consider $H_0\in\{\mathcal{D}_0,\mathcal{D}_1\}$. Since we have $\mathcal{D}_0^2=-\Delta$ and $\mathcal{D}_1^2=1-\Delta$, the resolvents are given by
\begin{align}
(\mathcal{D}_0-z)^{-1}&=(\mathcal{D}_0+z)(-\Delta-z^2)^{-1}\label{Dirac resolvent},\\
(\mathcal{D}_1-z)^{-1}&=(\mathcal{D}_1+z)(-\Delta-(z^2-1))^{-1}.
\end{align}
The proof of the claim is then a straightforward adaptation of the previous argument. Note that in the analogue of \eqref{low frequency bound case b in the proof of LpLp'} in the case of $\mathcal{D}_0$,
\begin{align*}
\|(\mathcal{D}_0+z)\chi^2(D)(-\Delta-z^2)^{-1}\|_{L^{\frac{2(d+1)}{d+3}}\to L^{\frac{2(d+1)}{d-1}}}\leq C(K,\chi),
\end{align*}
the multiplier $\mathcal{D}_0+z$ can be absorbed into $\chi^2(D)$ since the frequencies are bounded. 
\end{proof}

\section{Uniform resolvent estimates in Schatten spaces}


\begin{theorem}\label{main theorem schatten norms}
Let $H_0=T(D)$, and let $V$ be a complex-valued potential such that Assumptions \ref{unperturbed}--\ref{potential} hold. Then there exists a positive function $N(z)$, defined on $\rho(H_0)$ and having a continuous extension up to $\C\setminus\Lambda_c(H_0)$, such that the following inequalities hold.
\begin{itemize}
\item[a)] If $2d/(d+1)\leq s\leq (d+1)/2$ and $d/s\leq q\leq (d+1)/2$ or if $(d+1)/2<s<d$ and $2d/(d+1)\leq q\leq (d+1)/2$, then there exists $C_{q,d,s}>0$ such that
\begin{align}\label{eq. Schatten bound theorem a}
\||V|^{1/2}R_0(z)|V|^{1/2}\|_{\mathfrak{S}^{q(d-1)/(d-q)}}\leq C_{q,d,s} N(z)\|V\|_{L^q}.
\end{align}
\item[a')] If $(d+1)/2<s<d$ and $d/s\leq q<2d/(d+1)$, then, for every $\epsilon>0$, there exists $C_{q,d,s,\epsilon}>0$ such that
\begin{align}\label{eq. Schatten bound theorem a'}
\||V|^{1/2}R_0(z)|V|^{1/2}\|_{\mathfrak{S}^{q(d-1)/(d-q)+\epsilon}}\leq C_{q,d,s,\epsilon}N(z)\|V\|_{L^q}.
\end{align}
\item[b)] If $0<s< 2d/(d+1)$, then, for every $\epsilon>0$, there exists $C_{d,s,\epsilon}>0$ such that
\begin{align}\label{eq. Schatten bound theorem b}
\||V|^{1/2}R_0(z)|V|^{1/2}\|_{\mathfrak{S}^{\max\{d+1,d/s+\epsilon\}}}\leq C_{d,s,\epsilon}N(z)\|V\|_{L^{d/s}\cap L^{(d+1)/2}}.
\end{align}
\end{itemize}
\end{theorem}

\begin{remark}\label{remark main theorem schatten norms}
From scaling arguments, one can obtain the following expressions for the function $N(z)$. A proof is given following the proof of Theorem \ref{main theorem schatten norms}.
\begin{itemize}
\item If $H_0=(-\Delta)^{s/2}$, then we may choose $N(z):=|z|^{\frac{d}{sq}-1}$ if $s\geq 2d/(d+1)$ and $N(z):=(1+|z|)^{\frac{2d}{s(d+1)}-1}$ if $s< 2d/(d+1)$.
\item If $H_0=(1-\Delta)^{s/2}-1$, then 
\begin{align*}
N(z):=
\begin{cases}
|z|^{\frac{d}{2q}-1}\quad&\mbox{if }|z|<1,\\
|z|^{\frac{d}{sq}-1}\quad&\mbox{if }|z|\geq 1,
\end{cases}
\quad \mbox{and  } s\geq  2d/(d+1),
\end{align*}
and 
\begin{align*}
N(z):=
\begin{cases}
|z|^{\frac{s}{2}-1}\quad&\mbox{if }|z|<1,\\
|z|^{\frac{2d}{s(d+1)}-1}\quad&\mbox{if }|z|\geq 1,
\end{cases}
\quad \mbox{and  } s<  2d/(d+1).
\end{align*}
\item If $H_0=\mathcal{D}_0$, then $N(z):=(1+|z|)^{\frac{d-1}{d+1}}$.
\item If $H_0=\mathcal{D}_1$, then 
\begin{align*}
N(z):=
\begin{cases}
|z^2-1|^{-\frac{1}{2}}\quad&\mbox{if }|z^2-1|<1,\\
|z|^{\frac{d-1}{d+1}}\quad&\mbox{if }|z^2-1|\geq 1.
\end{cases}
\end{align*}
\end{itemize}
\end{remark}

\begin{remark}\label{rem. question remark 8 in FS}
As will be evident from the proof, Theorem \ref{main theorem schatten norms} remains valid for $s=d$ under the additional restriction $q>d/s$. In particular, the following extension of Theorem \ref{main theorem schatten norms} (a') to the case $d=2$, $s=2$ holds. For $1<q\leq 4/3$ and for $\epsilon>0$ there is a constant $C_{q,\epsilon}$ such that
\begin{align}\label{eq. 2d Laplacian}
\||V|^{1/2}(-\Delta-z)^{-1}|V|^{1/2}\|_{\mathfrak{S}^{q/(2-q)+\epsilon}}\leq C_{q,\epsilon}|z|^{\frac{1}{q}-1}\|V\|_{L^q}.
\end{align}
This settles a question posed in \cite[Remark 8]{FrankSabin14} and improves the result of \cite[Proposition 9]{FrankSabin14} where \eqref{eq. 2d Laplacian} was proved for the Hilbert-Schmidt norm instead.
\end{remark}

\begin{lemma}\label{lemma schatten estimate low frequency}
Let $T\in C^2(\R^d,\R)$ and let $\chi$ be a smooth compactly supported function such that $T$ has no critical points in $\supp(\chi)$. Assume that the Gaussian curvature of the level sets $S_{\lambda}=\set{\xi\in\supp(\chi)}{T(\xi)=\lambda}$ never vanishes. Then for $1\leq q\leq (d+1)/2$ and for all $z\in\rho(T)$, we have the estimate
\begin{align}\label{inequality Schatten low frequency}
\||V|^{1/2}\chi^2(D)R_0(z)|V|^{1/2}\|_{\mathfrak{S}^{\alpha_q}}\leq C(T,\chi)\|V\|_{L^{q}},
\end{align}
where $\alpha_q=q(d-1)/(d-q)$ for $2d/(d+1)\leq q\leq (d+1)/2$ and $\alpha_q=q(d-1)/(d-q)+\epsilon$ for $1\leq q< 2d/(d+1)$ and arbitrary $\epsilon>0$; in the latter case, the constant $C(T,\chi)$ also depends on $\epsilon$.
\end{lemma}

\begin{proof}
1. Let $\Omega,\Omega'\subset\R$ be such that $T(\supp(\chi))\subset\subset \Omega\subset\subset \Omega'$. Assume first that $\lambda=\re z\in \C\setminus \Omega'$. 
Since $q(d-1)/(d-q)\geq q$, it follows from~\eqref{embeddings Schatten spaces} and from H\"older's inequality in Schatten spaces \cite[Thm. 2.8]{Simon2005} that
\begin{equation}\label{prelude to Kato-Seiler-Simon}
\||V|^{1/2}\chi^2(D)R_0(z)|V|^{1/2}\|_{\mathfrak{S}^{q(d-1)/(d-q)}}\leq 
\||V|^{1/2}\chi^2(D)|R_0(z)|^{1/2}\|_{\mathfrak{S}^{2q}}^2.
\end{equation}
By the Kato-Seiler-Simon inequality (see e.g.\ \cite[Thm. 4.1]{Simon2005}), we have the estimate
\begin{align*}
\||V|^{1/2}\chi^2(D)|R_0(z)|^{1/2}\|_{\mathfrak{S}^{2q}}\leq C\|V\|_{L^q}\|\chi(\cdot)(T(\cdot)-z)^{-1}\|_{L^{2q}}
\leq \frac{C\|\chi\|_{L^{2q}}}{\dist(\partial\Omega',\Omega)}\|V\|_{L^q},
\end{align*}
Together with \eqref{prelude to Kato-Seiler-Simon}, the previous estimate yields
\begin{align}\label{Kato-Seiler-Simon}
\||V|^{1/2}\chi^2(D)R_0(z)|V|^{1/2}\|_{\mathfrak{S}^{q(d-1)/(d-q)}}\leq C(T,\chi)\|V\|_{L^q}.
\end{align}

2. Assume now that $\lambda\in \Omega'$. We first prove \eqref{inequality Schatten low frequency} for $z=\lambda\pm \I 0$ and for $2d/(d+1)< q\leq (d+1)/2$. Note that the limits
\begin{align}
(T(\xi)-\lambda\pm\I 0)^{-1}&=p.v.(T(\xi)-\lambda)^{-1}\mp\I\pi\delta(T(\xi)-\lambda),\label{distributional limit T-lambda}
\\
(T(\xi)-\lambda\pm\I 0)^{-\sigma}&=(T(\xi)-\lambda)_+^{-\sigma}+\e^{-\I\pi\sigma}(T(\xi)-\lambda)_-^{-\sigma},
\end{align}
where, $\sigma\in \C\setminus\{\ldots,-2,-1,0,1\}$, exist in the sense of tempered distributions, see e.g.\ \cite{GelfandShilov1977}.
We follow the outline of the proof of \cite[Theorem 12]{FrankSabin14} and apply complex interpolation to the family $\zeta\mapsto|V|^{\zeta/2}\chi^2(D)R_0(\lambda\pm\I 0)^{\zeta}|V|^{\zeta/2}$. We shall prove the following bounds: For $\re\,\zeta=0$,
\begin{align}\label{complex interpolation easy}
\||V|^{\zeta/2}\chi^2(D)R_0(\lambda\pm\I 0)^{\zeta}V^{\zeta/2}\|_{\mathfrak{S}^{\infty}}&\leq C(T,\chi,\im\,\zeta),
\end{align}
and for $1\leq \re\,\zeta\leq (d+1)/2$,
\begin{align} \label{complex interpolation hard}
\||V|^{\zeta/2}\chi^2(D)R_0(\lambda\pm\I 0)^{\zeta}V^{\zeta/2}\|_{\mathfrak{S}^{2}}&\leq C(T,\chi,\im\,\zeta)\|V\|_{L^{\frac{2d\re\,\zeta}{d-1+2\re\,\zeta}}}^{\re\,\zeta}.
\end{align}
Here and in the following, we will use the convention that constants depending on $\im\,\zeta$ grow at most like $\e^{C'|\im\,\zeta|^2}$ for some $C'>0$. In particular, all analytic families we consider here are of admissible growth in the sense of \cite[Theorem 13.1]{MR0246142}. This theorem, together with \eqref{complex interpolation easy}--\eqref{complex interpolation hard} implies that, for all $1\leq a\leq (d+1)/2$,
\begin{align}\label{Schatten bound in proof for lambda pm i epsilon}
\||V|^{1/2}\chi^2(D)R_0(\lambda\pm\I 0)|V|^{1/2}\|_{\mathfrak{S}^{2a}}&\leq C(T,\chi)\|V\|_{L^{\frac{2da}{d-1+2a}}}.
\end{align}
By a change of variables $2a=q(d-1)/(d-q)$, this is the claimed inequality for $2d/(d+1)\leq q\leq (d+1)/2$.
Inequality \eqref{complex interpolation easy} trivially follows from Plancherel's theorem. 
Inequality \eqref{complex interpolation hard} would follow from the pointwise kernel bounds
\begin{align}\label{kernel bound complex powers}
|\chi^2(D)R_0(\lambda\pm\I 0)^{a+\I t}(x-y)|\leq C(t)(1+|x-y|)^{-\frac{d+1}{2}+a}
\end{align}
and the Hardy-Littlewood-Sobolev inequality. It remains to prove \eqref{kernel bound complex powers}. As in the proof of Lemma \ref{lemma resolvent estimate low frequency}, we write $T(\xi)-\lambda$ locally as in \eqref{implicit function theorem}; then
\begin{align*}
(T(D)-\lambda\pm\I 0)^{-a-\I t}=e(D,\lambda)^{-a-\I t}(D_1-h(D',\lambda)\pm\I 0)^{-a-\I t}.
\end{align*}
Since $(\chi e^{-a-\I t})(D,\lambda)$ is a smoothing operator, \eqref{kernel bound complex powers} would thus follow if we proved
\begin{align}\label{kernel bound complex powers reduced}
|\chi(D)(D_1-h(D',\lambda)\pm\I 0)^{-a-\I t}(x-y)|\leq C(t)(1+|x-y|)^{-\frac{d+1}{2}+a}.
\end{align}
For $a=1$, the estimate \eqref{kernel bound complex powers reduced} follows from \eqref{L1-Linfty}. For $a>1$,
similarly as in the proof of Lemma \ref{lemma resolvent estimate low frequency}, we have
\begin{align*}
\chi(D)(D_1-h(D',\lambda)\pm\I 0)^{-a+\I t}(x)= \int_{\R^{d-1}}\e^{2\pi\I[x'\cdot\xi'+x_1h(\xi',\lambda)]}\psi_t(\xi',x_1)\rd\xi',
\end{align*}
this time with
\begin{align*}
\psi_t(\xi',x_1)=\frac{1}{\Gamma(a-\I t)}\e^{\I(a-\I t)\pi/2}\int_{\R}(x_1-s)_{\pm}^{a-\I t-1}(\mathcal{F}_1\chi(\cdot,\xi'))(-s)\rd s.
\end{align*}
Here, we used an explicit formula for the Fourier transform of $(\xi_1\pm\I 0)^{-a+\I t}$, $a>1$, see e.g.\ \cite{GelfandShilov1977} or \cite[Example 11.3]{Eskin2011}. Note that
$|\Gamma(z)|^{-1}\leq C^{\pi^2|\im z|^2}$.
A Schwartz tails argument then yields
\begin{align*}
|\partial_{\xi'}^{\alpha}\psi_t(\xi',x_1)|\leq C_{\alpha}\e^{C't^2}(1+|x_1|)^{a-1}\quad\mbox{for all  }\alpha\in\N^d_0.
\end{align*}
Thus, \eqref{kernel bound complex powers reduced} follows from the same stationary phase argument as before. 

3. We now consider the case $1\leq q< 2d/(d+1)$. We first prove that
\begin{align}\label{Schatten estimate for spectral function}
\||V|^{1/2}\chi^2(D)\rd E(\lambda)|V|^{1/2}\|_{\mathfrak{S}^1}\leq C(T,\chi)\|V\|_{L^1},
\end{align}
where $\rd E(\lambda)$ denotes the spectral measure associated to the operator $T(D)$. To prove this, write $dE(\lambda)=R(\lambda)^*R(\lambda)\rd \lambda$, where $R(\lambda)$ is the Fourier restriction operator to the level set $S_{\lambda}$. As in the proof of \cite[Theorem 2]{FrankSabin14}, we factorize the left hand side as a product of Hilbert-Schmidt operators, $|V|^{1/2}\chi(D)R(\lambda)^*:L^2(S_{\lambda})\to L^2(\R^d)$ and its dual. Since the kernel of this operator is given by $|V(x)|^{1/2}\chi(\xi)\e^{2\pi\I x\cdot\xi}$, one immediately gets
\begin{align*}
\||V|^{1/2}\chi(D)R(\lambda)\|_{\mathfrak{S}^2(L^2(S_{\lambda})\to L^2(\R^d))}^2=\|\chi\|_{L^2(S_{\lambda})}^2\|V\|_{L^1}. 
\end{align*}  
The claim \eqref{Schatten estimate for spectral function} follows from H\"older's inequality in trace ideals. We now write
\begin{align*}
R_0(z)^{b+\I t}=\int_{\R}(\lambda-z)^{-b-\I t}\rd E(\lambda).
\end{align*}
Using the local integrability of $(\cdot-z)^{-b-\I t}$ and \eqref{Schatten estimate for spectral function}, we then obtain, for $0<b<1$,
\begin{align}\label{Schatten bound for re xi less than 1}
\||V|^{1/2}\chi^2(D)R_0(z)^{b+\I t}V|^{1/2}\|_{\mathfrak{S}^1}\leq C(T,\chi,t)(1-b)^{-1}\|V\|_{L^1}.
\end{align}
Next, we observe that, since the kernel of $R_0(\lambda\pm\I 0)^{\frac{d+1}{2}+\I t}$ is bounded, we have 
\begin{align}\label{Schatten bound endpoint a q eq. 1}
\||V|^{1/2}\chi^2(D)R_0(\lambda\pm\I 0)^{\frac{d+1}{2}+\I t}|V|^{1/2}\|_{\mathfrak{S}^2}\leq C(T,\chi,t)\|V\|_{L^1}. 
\end{align}
Consider the analytic family $\zeta\mapsto |V|^{1/2}R_0(z)^{\zeta}|V|^{1/2}$, $b\leq\re\,\zeta\leq a$, with $a=(d+1)/2>1$ and $\mathbf{b}=(1-d\epsilon)/(1-\epsilon)<1$.
Complex interpolation between \eqref{Schatten bound for re xi less than 1} and \eqref{Schatten bound endpoint a q eq. 1} yields 
\begin{align}\label{Schatten bound in S 1+eps}
\||V|^{1/2}\chi^2(D)R_0(\lambda\pm\I 0)|V|^{1/2}\|_{\mathfrak{S}^{1+\epsilon}}\leq C(T,\chi)\epsilon^{-\frac{1-\epsilon}{1+\epsilon}}\|V\|_{L^1} 
\end{align}
for every $\epsilon\in (0,1/2)$. The result for $1\leq q< 2d/(d+1)$ now follows from interpolation between \eqref{Schatten bound in S 1+eps} and the estimate for $2d/(d+1)\leq q\leq (d+1)/2$ already proven (see Lemma \ref{lem: convexity} in the appendix and Figure 1 below).

\begin{figure}[h]\label{Figure}
 \begin{center}
 \psset{unit=.27cm}
\begin{pspicture}(-2,-2)(23,12)
\pspolygon[linestyle=none,fillstyle=solid,fillcolor=gray](10.5,0)(10.5,2.5)(21,10)(21,0)(10.5,0)
\psline(10.5,2.5)(14,5)
\psline[linestyle=dotted](14,5)(21,10)
\psline(10.5,0)(10.5,2.5)
\psline(21,0)(21,10)
\psdot(10.5,2.5)
\psdot(14,5)
\psdot[dotstyle=o](21,10)
\psline[linestyle=solid,arrowsize=5pt,arrows=->](0,0)(22.1,0)\rput[l](22.2,0){$\frac{1}{q}$}
\psline(10.5,-.2)(10.5,.2)\rput[t](10.5,-.5){$\frac{2}{d+1}$}
\psline(14,-.2)(14,.2)\rput[t](14,-.5){$\frac{d+1}{2d}$}
\psline(21,-.2)(21,.2)\rput[t](21,-.5){$1$}
\psline[linestyle=solid,arrowsize=5pt,arrows=->](0,0)(0,11.1)\rput[b](0,11.2){$\frac{1}{\alpha}$}
\psline(-.2,2.5)(.2,2.5)\rput[r](-.5,2.5){$\frac{1}{d+1}$}
\psline(-.2,5)(.2,5)\rput[r](-1,5){$\frac{1}{2}$}
\psline(-.2,10)(.2,10)\rput[t](-1,10){$1$}
\end{pspicture}
\end{center}
\caption{Range of validity of the estimate \eqref{inequality Schatten low frequency}.}
\label{fig:1}
\end{figure}

4. We have proved that \eqref{inequality Schatten low frequency} holds for $z=\lambda\pm\I 0$ for all $\lambda\in\R$. 
Next, we are going to use the Phragm\'en-Lindel\"of maximum principle to prove that the inequality holds for $z\in\C^{\pm}$. We postpone the details to the appendix.
\end{proof}

\begin{remark}\label{remark schatten estimate low frequency}
By inspection of the proofs of Lemma \ref{lemma resolvent estimate low frequency} and Lemma \ref{lemma schatten estimate low frequency}, one checks that the constant $C(T,\chi)$ in~\eqref{inequality Schatten low frequency} depends on $T$ and $\chi$ only through 
\begin{itemize}
\item a lower bound for the Gaussian curvature of $S_{\lambda}$,
\item a lower bound for $|\nabla T|$ on $\supp (\chi)$,
\item The size of $\supp(\chi)$ and finitely many Schwartz semi-norms of $\chi$. 
\end{itemize}
\end{remark}

\begin{proof}[Proof of Theorem \ref{main theorem schatten norms}]
a) Let $K\subset\subset \Omega\subset\subset\Omega'\subset\subset \C\setminus\Lambda_c(H_0)$, and let $\chi$ be a bump function supported on $\Omega'$ and such that $\chi=1$ on $\Omega$. Lemma \ref{lemma schatten estimate low frequency} yields that for all $z\in K\cap\rho(H_0)$, we have 
\begin{align}\label{restriction schatten bound}
\||V|^{1/2}\chi^2(D)R_0(z)|V|^{1/2}\|_{\mathfrak{S}^{\alpha_q}}\leq C\|V\|_{L^q}.
\end{align}
It remains to prove
\begin{align}\label{elliptic Schatten bound}
\||V|^{1/2}(1-\chi^2(D))R_0(z)|V|^{1/2}\|_{\mathfrak{S}^{\alpha_q}}\leq C\|V\|_{L^q}.
\end{align}
For $q>d/s$, the Kato-Seiler-Simon inequality 
yields the better bound
\begin{align}\label{Kato-Seiler-Simon 2}
\||V|^{1/2}(1-\chi^2(D))R_0(z)|V|^{1/2}\|_{\mathfrak{S}^q}\leq C\|V\|_{L^q}.
\end{align}
For $q=d/s$, we use the kernel bound (see \cite[Prop. VI.4.1]{Stein1993})
\begin{align*}
|(1-\chi^2(D))R_0(z)(x-y)|\leq C\left(|x-y|^{s-d}\mathbf{1}\{|x-y|\leq 1\}+\langle x-y\rangle^{-N}\right)
\end{align*}
and the Hardy-Littlewood-Sobolev inequality to show that a better bound than \eqref{elliptic Schatten bound} holds, with the $\mathfrak{S}^2$ norm on the left hand side, provided\footnote{This reflects the conditions in the Hardy-Littlewood-Sobolev inequality.} $d/2<s<d$. For the case $0<s\leq d/2$, we use complex interpolation on the family of operators $|V|^{\zeta/2}(1-\chi^2(D))R_0(z)^{\zeta}|V|^{\zeta/2}$. Similarly as before, we have the kernel bounds 
\begin{equation}\begin{split}\label{kernel bound 1-chi complex powers}
&|(1-\chi^2(D))R_0(z)^{a+\I t}(x-y)|\\
&\leq C(t)\left(|x-y|^{as-d}\mathbf{1}\{|x-y|\leq 1\}+\langle x-y\rangle^{-N}\right),
\end{split}
\end{equation}
provided that $as-d<0$. Note that the constant $C(t)$ grows exponentially since we have
\begin{align*}
\left|\partial_{\xi}^{\alpha}\left[(1-\chi(\xi))(T(\xi)-z)^{-a-\I t}\right]\right|\leq C_{\alpha}\e^{\pi|t|}\langle\xi\rangle^{-as-|\alpha|}.
\end{align*}
Here we choose the branch of the argument function satisfying $-\pi\leq \arg(\cdot)\leq \pi$.
Let $a=d/(2s)+\epsilon$ with $0<\epsilon<d/(4s)$. Then $as-d<0$ and
$0<2(as-d)<d$, so the Hardy-Littlewood-Sobolev inequality together with \eqref{kernel bound 1-chi complex powers} yields 	
\begin{align*}
\||V|^{\zeta/2}(1-\chi^2(D))R_0(z)^{\zeta}|V|^{\zeta/2}\|_{\mathfrak{S}^{2}}
\leq C\|V\|_{L^{d/s}}^{a},\quad\re\,\zeta=a.
\end{align*}
Interpolation with the trivial bound \eqref{complex interpolation easy} yields
\begin{align}\label{elliptic schatten bound very small s}
\||V|^{1/2}(1-\chi^2(D))R_0(z)|V|^{1/2}\|_{\mathfrak{S}^{2a}}
\leq C\|V\|_{L^{d/s}}.
\end{align}
Since $s<d$, we can always choose $\epsilon>0$ such that $2a=d/s+2\epsilon<(d-1)/(s-1)$. Since $q(d-1)/(d-q)=(d-1)/(s-1)$ for $q=d/s$, inequality \eqref{elliptic schatten bound very small s} is therefore better than \eqref{elliptic Schatten bound}.

b) One combines \eqref{restriction schatten bound} with $q=(d+1)/2$ and \eqref{elliptic schatten bound very small s} with  $a=d/(2s)+\epsilon$ as above. The exponents of the Schatten spaces in the resulting estimates are $d+1$ and $2a=d/s+2\epsilon$, respectively. The claim follows from \eqref{embeddings Schatten spaces}. 
\end{proof}

\begin{proof}[Proof of Remark \ref{remark main theorem schatten norms}]
We first consider $H_0=(-\Delta)^{s/2}$, $s\geq 2d/(d+1)$: The claim follows by scaling. More precisely, let $(\sigma_{\delta}f)(x):=\delta^{-d/2}f(x/\delta)$ be the dilation operator and let $A\in\mathfrak{S}^{\alpha}(L^2(X))$, $B:=\sigma_{\delta}A\sigma_{\delta}$. Since $\sigma_{\delta}$ is unitary on $L^2(X)$, it follows that $B\in\mathfrak{S}^{\alpha}(L^2(X))$ and $\|B\|_{\mathfrak{S}^{\alpha}}=\|A\|_{\mathfrak{S}^{\alpha}}$. Taking $\delta=|z|^{1/s}$, $A=|V|^{1/2}R_0(z)|V|^{1/2}$ and $B=|z|^{-1}|V(|z|^{-1/s}\cdot)|^{1/2}R_0(z/|z|)|V(|z|^{-1/s}\cdot)|^{1/2}$ and applying the result of Theorem \ref{main theorem schatten norms} a) with $|z|=1$ yields the claim.

Now consider $H_0=(1-\Delta)^{s/2}-1$, $s\geq 2d/(d+1)$: Write $z=\lambda+\I \epsilon$. In order to find the singularities and decay of $N(z)$, we consider the following cases.
\begin{itemize}
\item[i)] $\delta\leq \lambda\leq \delta^{-1}$ and $0<|\epsilon|\leq 1$,
\item[ii)] $-\delta\leq \lambda\leq 0$ and $0<|\epsilon|\leq 1$,
\item[iii)] $0<\lambda<\delta$ and $\lambda\leq |\epsilon|<1$,
\item[iv)] $0<\lambda<\delta$ and $0<|\epsilon|<\lambda$,
\item[v)] $\lambda<-\delta$ or $|\epsilon|\geq 1$,
\item[vi)] $\delta^{-1}<\lambda$ and $0<|\epsilon|<1$.
\end{itemize} 
Here, $\delta>0$ is some fixed small constant. In case i) Lemma \ref{lemma schatten estimate low frequency} yields a uniform bound $N(z)\leq C(\delta)$.
In cases ii)-iii), pick a bump function $\psi_0$ such that $\psi_0=1$ on $B(0,2\delta)$. Then the estimate for $(1-\psi_0^2(D))R_0(z)$ is uniform in $z$.
For $\xi\in\supp(\psi_0)$ we have $|T(\xi)-z|\geq |z|/2$. Hence, Kato-Seiler-Simon yields 
\begin{align*}
\||V|^{1/2}\psi_0^2(D)R_0(z)|V|^{1/2}\|_{\mathfrak{S}^q}\leq C\|\psi_0^2(T(\cdot)-z)^{-1}\|_{L^q}\|V\|_{L^q},
\end{align*}
In case iv), pick a bump function $\chi$ such that $\supp(\chi)\subset \{1/2\leq|\xi|\leq 3/2\}$ and $\chi=1$ on $\{3/4\leq|\xi|\leq 5/4\}$. For the part $(1-\chi^2(\sqrt{|z|}D))R_0(z)$ the same argument as above applies. For the part $\chi^2(\sqrt{|z|}D)R_0(z)$, we use an approximate scaling argument. Let $\widetilde{T}(\xi)=|z|^{-1}T(|z|^{1/2}\xi)$ and let $\widetilde{R}_0(z)$ be the resolvent of the corresponding multiplier. A straightforward computation shows that
\begin{align}\label{uniform bounds Ttilde}
|\nabla \widetilde{T}(\xi)|\geq \frac{s}{2}-\mathcal{O}(\delta), \quad \Det D^2\widetilde{T}(\xi)\geq s^d-\mathcal{O}(\delta),\quad \xi\in\supp(\chi).
\end{align}
Therefore, for $\delta$ sufficiently small, Lemma \ref{lemma schatten estimate low frequency} and Remark \ref{remark main theorem schatten norms} imply that there is a constant $C_{\epsilon,\delta}$ such that for all $z$ in the region iv), we have
\begin{align*}
\||V|^{1/2}\chi^2(D)\widetilde{R_0}(z/|z|)|V|^{1/2}\|_{\mathfrak{S}^{q(d-1)/(d-q)+\epsilon}}\leq C_{\epsilon,\delta}\|V\|_{L^q},
\end{align*}
where $\epsilon=0$ in case a) or $\epsilon>0$ in case a') of Theorem \ref{main theorem schatten norms}. The same scaling argument as in the homogeneous case then proves the claim.
Case v) is similar to cases ii)-iii), but yields $N(z)=|z|^{\frac{d}{sq}-1}$.
Case vi) is similar to case iv), but here one uses $\widetilde{T}(D)=|z|^{-1}T(|z|^{1/s}D)$ as the rescaled operator and $\chi^2(|z|^{-1/s}D)$ as the localization. Similar lower bounds as in \eqref{uniform bounds Ttilde} are obtained by Taylor expansion of $T(\xi)$ at infinity.

For $H_0\in\{\mathcal{D}_0,\mathcal{D}_1\}$ one uses \eqref{Dirac resolvent}. The modifications are similar as in the proof of Theorem \ref{theorem uniform Sobolev inequality fractional Laplacian}.  
\end{proof}

\section{Proof of Theorem \ref{Main theorem}}

\begin{proof}
Corollary \ref{corollary uniform Sobolev inequality fractional Laplacian} and the Birman-Schwinger principle imply that $H_0+V$ has no eigenvalues $z\in K\cap\rho(H_0)$ if $\|V\|$ is sufficiently small\footnote{$\|V\|$ was defined in \eqref{def. of ||V||}}. 

To prove \eqref{sum of eigenvalues converges} we define the holomorphic function\footnote{For a proof that $h$ is holomorphic see e.g.\ \cite[Lemma 12]{FrankSabin14}.}
\begin{align}\label{definition of holomorphic h}
\C\setminus[0,\infty)\ni z\mapsto h(z)=\Det_{\lceil \alpha\rceil}(I+V^{1/2}R_0(z)|V|^{1/2})\in\C,
\end{align}
where $\lceil \alpha\rceil$ is the smallest integer which is $\geq \alpha$ and $\alpha$ is the exponent in the Schatten space  $\mathfrak{S}^{\alpha}$ appearing in the bounds of Theorem \ref{main theorem schatten norms}. We remark that in the case of the Dirac operator, $V^{1/2}=|V|^{1/2}U$, where $U$ is the partial isometry in the polar decomposition of $V=|V|U$. By unitary invariance of the Schatten ideals $\mathfrak{S}^{\alpha}$, we may neglect $U$ in the following estimates. 
By \cite[Lemma XI.9.22]{DS2} and Theorem \ref{main theorem schatten norms} we have the bound 
\begin{align}\label{log bound in terms of Schatten norm}
\log|h(z)|\leq \Gamma_{\alpha}\||V|^{1/2}R_0(z)|V|^{1/2}\|_{\mathfrak{S}_{\alpha}}^{\alpha}\leq \Gamma_{\alpha}C^{\alpha}  N(z)^{\alpha}\|V\|^{\alpha}.
\end{align}
We need to map $\rho(H_0)$ conformally onto the unit disk $\mathbb{D}\subset\C$. Due to the special form of $\rho(H_0)$ 
we can write down such a map explicitly. In the case $H_0=\mathcal{D}_0$, we have $\rho(H_0)=\C^+\cup\C^-$, so we need two conformal maps $\varphi^{\pm}$ in this case. We can choose e.g.
\begin{align}\label{phitilde}
\varphi^+:\C^+\to\mathbb{D},\quad \varphi^+(z)=\frac{z-\I}{z+\I},
\end{align}
and an obvious modification for $\varphi^-$. If $H_0=(-\Delta)^{s/2}$ or $H_0=(1-\Delta)^{s/2}-1$, we can take $\varphi^+(\sqrt{\cdot})$, where $\sqrt{\cdot}$ is the principal branch of the square root on $\C\setminus[0,\infty)$. If $H_0=\mathcal{D}_1$, we compose the last map with $z\mapsto (z-1)/(z+1):\rho(H_0)\to\C\setminus[0,\infty)$. In all cases, the resulting map is denoted by $\psi_0:\rho(H_0)\to\mathbb{D}$. We will need a normalized version of this map. Fix $z_0\in\rho(H_0+V)$, $|z_0|>1$, and set 
 $\widetilde{z_0}=\psi_0(z_0)\in\mathbb{D}$. With the normalization
\begin{align}\label{normalization map}
\nu:\mathbb{D}\to\mathbb{D},\quad\nu(w)=\frac{w+\widetilde{z_0}}{1+\widetilde{z_0}w},
\end{align} 
we then define the map
\begin{align}\label{psi}
\psi=\nu^{-1}\circ \psi_0:\rho(H_0)\to \mathbb{D}
\end{align}
which has the property $\psi(z_0)=0$. In all cases, $\psi$ is a $C^{\infty}(K)$-diffeomorphism, i.e.\ $\psi$ and $\psi^{-1}$ are continuous on $K$, together with all their derivatives. In particular, since $K$ is compact, all these functions are bounded on $K$.

We note that a point $z_0$ as above always exists, but it may depend on $V$ instead of on $\|V\|$ only. This is due to the fact that $N(z)$ in Theorem \ref{main theorem schatten norms} may not tend to zero at infinity (and indeed may diverge).
This is only the case if we assume Assumption~\ref{potential} a) with $q=d/s$ or Assumption~\ref{potential} b). To find such a $z_0$, we decompose $V=V_1+V_2$, where $V_1=V\chi\{x:|V(x)|\geq \rho\}$ and $\rho$ to be chosen sufficiently large. Then $V_2\in L^{\infty}$, and thus $\sigma(H_0+V)$ lies in a $\rho$-neighborhood of $\sigma(H_0+V_1)$. By Chebyshev's inequality,
\begin{align*}
|\{x:|V(x)|\geq \rho\}|\leq \frac{\|V\|_{L^{d/s}}^{d/s}}{\rho^{d/s}}.
\end{align*}
Dominated convergence shows that $\|V_1\|_{L^{d/s}}$ tends to zero as $\rho\to\infty$. Choosing $\rho$ sufficiently large, we infer as in the beginning of the proof that $\sigma(H_0+V_1)\subset\sigma(H_0)$.\footnote{We only proved the absence of eigenvalues. However, by standard arguments $H_0+V$ can only have discrete spectrum outside $\sigma(H_0)$.}
We can thus pick any $z_0$ outside a $\rho$-neighborhood of $\sigma(H_0)$. If we assume Assumption \ref{potential} a) with $d/s<q\leq (d+1)/2$, then Theorem \ref{thm bounds on individual eigenvalues} shows that choosing a $z_0\in\rho(H_0)$ such that
\begin{align}\label{choice z0}
|z_0|\geq C \|V\|_{L^q}^{sq/(sq-d)},
\end{align}
with $C$ sufficiently large, we will have $z_0\in \rho(H_0+V)$ and thus $h(z_0)\neq 0$. The lower bound in \eqref{choice z0} only depends on $d,q,s$ via the constant in Theorem \ref{eq eigenvalue bound fractional Laplcian a) homog}. With this choice of $z_0$, let us define a holomorphic function
on the unit disk,
\begin{align}\label{holomorphic g}
g:\mathbb{D}\to \C,\quad g(w)=\frac{h(\psi^{-1}(w))}{h(z_0)}.
\end{align} 
Note the normalization $g(0)=1$. By \eqref{log bound in terms of Schatten norm}, we have
\begin{equation}
\begin{split}
\log|g(w)|&\leq C N(\psi^{-1}(w))^{\alpha}\|V\|^{\alpha}-\log|h(z_0)|\label{log bound g}\\
&\leq C(V)\prod_{w_c\in \psi(\Lambda_c(H_0)\cup\{\infty\})}|w-w_c|^{-\mu(w_c)}
\end{split}
\end{equation}
where $\mu(w_c)\geq 0$ depend on $d,s,q$ and the conformal map $\psi$ and could be computed from the estimates in Remark~\ref{remark main theorem schatten norms}). We will not need the precise values for the time being.\footnote{Some concrete examples are given in Section \ref{section further results}}
Note that the constant $C(V)$ depends on $V$ via $\|V\|$ and $z_0$.

We now invoke a theorem of Borichev, Golinskii and Kupin \cite[Thm. 03]{BorichevEtAl2009}: Assume that $g:\mathbb{D}\to\C$ is analytic, $|g(0)|=1$ and that there exist $M>0$, $\mathbf{b}=\{b_i\}_{i=1}^N\subset\partial\mathbb{D}$ and $\mathbf{k}=\{k_i\}_{i=1}^N\subset [0,\infty)$ such that $\log|g(w)|\leq M\prod_{i=1}^N|w-b_i|^{-k_i}$. Then, for any $\tau>0$, there exists $A_{\kappa,\tau}>0$ such that
\begin{align}\label{Borichev Golinski Kupin}
\sum_{g(w)=0}(1-|w|)\prod_{i=1}^N|w-b_i|^{(k_i-1+\tau)_+}\leq A_{\kappa,\tau}M.
\end{align}
Here and in the following, all zeros are counted with multiplicity.
Using \eqref{log bound g}, we apply this theorem with $\mathbf{b}=\psi(\Lambda_c(H_0)\cup\{\infty\})$, $\vec{\kappa}=\{\mu(w_c):w_c\in b\}$ and with $M=C(V)$. The result is that
\begin{align}\label{second last step in main thm}
\sum_{g(w)=0}(1-|w|)\prod_{w_c\in \psi(\Lambda_c(H_0)\cup\{\infty\})}|w-w_c|^{(\mu(w_c)-1+\tau)_+}\leq  A_{\kappa,\tau} C(V).
\end{align}
Koebe's distortion theorem \cite[Corollary 1.4]{MR1217706} implies that
\begin{align}\label{Koebe distortion}
\frac{1}{4}(1-|w|)|\psi'(w)|\leq \dist(\psi(w),\sigma(H_0))\leq 2(1-|w|)|\psi'(w)|
\end{align}
for all $w\in\mathbb{D}$. In particular, we have that $(1-|w|)\approx \dist(\psi(w),\sigma(H_0))$ for $z\in K$ since 
\begin{align*}
\frac{1}{C(K)}\leq |\psi'(w)|\leq C(K),\quad w\in \psi^{-1}(K).
\end{align*}
Writing \eqref{second last step in main thm} in terms of $z$ instead of $w$ and restricting summation to $z\in K$, we thus obtain
\begin{align}\label{last step in main thm}
\sum_{z\in \sigma_{\rm d}(H_0+V)\cap K}\dist(z,\sigma(H_0))\leq C(K,V).
\end{align}
\end{proof}

\begin{remark}
As already pointed out, we only need very rough bounds for $N(z)$ in the above proof. It might be asked why we need bounds $N(z)$ for $z\notin K$; after all, the summation is only over $z\in K$. Suppose $K'\subset\subset\Omega\subset\subset K$, where $\Omega\subset \rho(H_0)$ is a domain. We could use the Riemann mapping theorem to map $\psi:\Omega\to\mathbb{D}$ conformally. The bound for $\log|g(w)|$ in \eqref{log bound g} would then be uniform and \eqref{last step in main thm} would hold with $K$ replaced by $K'$, with a bound depending on $\sup_{z\in K'}|\psi'(z)|$. But since $\dist(K',\sigma(H_0))>0$, this is obvious as the sum is finite. To get a non-trivial result, we need $\dist(\Omega,\sigma(H_0))=0$, and hence $\partial\Omega$ could be non-smooth. In this case, it is generally not true that $\psi$ extends smoothly up to the boundary. 
\end{remark}

\section{Further results}\label{section further results}

In the following, we consider some refinements of Theorem \ref{Main theorem} in certain special cases. The character of the results presented here is exemplary rather than exhaustive. In particular, we extend results of \cite{Frank11}, \cite{FrankSabin14} and \cite{LaSa09} to fractional Schr\"odinger and Dirac operators.

\subsection{Bounds on individual eigenvalues}

\begin{theorem}\label{thm bounds on individual eigenvalues}
a) Let $H_0=(-\Delta)^{s/2}$ and $2d/(d+1)\leq s<d$, and assume $V\in L^q(X)$ with $d/s\leq q\leq (d+1)/2$. Then any eigenvalue $z\in\C\setminus[0,\infty)$ of $H_0+V$ satisfies
\begin{align}\label{eq eigenvalue bound fractional Laplcian a) homog}
|z|^{q-\frac{d}{s}}\leq C\|V\|_{L^q}^q. 
\end{align}
b) Let $H_0=T(D)$ with $T(D)$ as in Assumption \ref{unperturbed} and with $0<s<d$, and assume $V\in L^q(X)$ with $d/s\leq q$. Then any eigenvalue $z\in\C\setminus\sigma(H_0)$ of $H_0+V$ satisfies
\begin{align}\label{eq eigenvalue bound fractional Laplcian b) homog}
(|\im z|/|\re z|)^{d/s-1}|\im z|^{q-d/s}\leq C\|V\|_{L^q}^q
\end{align}
\end{theorem}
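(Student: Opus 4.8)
The plan is to deduce both inequalities from the Birman--Schwinger principle, as in the special case already treated: if $z\in\rho(H_0)$ is an eigenvalue of $H_0+V$, then $-1$ is an eigenvalue of the compact operator $|V|^{1/2}R_0(z)V^{1/2}$, so $\||V|^{1/2}R_0(z)V^{1/2}\|_{L^2\to L^2}\ge 1$, and hence $\||V|^{1/2}R_0(z)V^{1/2}\|_{\mathfrak{S}^{\gamma}}\ge 1$ for every finite $\gamma$. It then suffices, for each $z$ of interest, to bound $\||V|^{1/2}R_0(z)V^{1/2}\|_{L^2\to L^2}$ from above by $C\,\|V\|_{L^q}\,g(z)$ with an explicit $g$, and to rearrange.

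Part a) is immediate from the Schatten estimate. Under its hypotheses ($s\ge 2d/(d+1)$, $d/s\le q\le(d+1)/2$) Theorem~\ref{main theorem schatten norms}a) gives $\||V|^{1/2}R_0(z)V^{1/2}\|_{\mathfrak{S}^{\alpha}}\le N(z)\|V\|_{L^q}$ with $\alpha=q(d-1)/(d-q)<\infty$, and the scaling bound from Remark~\ref{remark main theorem schatten norms} gives $N(z)\le C|z|^{\frac{d}{sq}-1}$ for $H_0=(-\Delta)^{s/2}$. Birman--Schwinger then forces $1\le C|z|^{\frac{d}{sq}-1}\|V\|_{L^q}$, i.e.\ $|z|^{1-\frac{d}{sq}}\le C\|V\|_{L^q}$; raising to the $q$-th power yields $|z|^{q-d/s}\le C^{q}\|V\|_{L^q}^{q}$.

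Part b) is more involved, because $q$ is allowed to exceed $(d+1)/2$, so Theorem~\ref{main theorem schatten norms} is not available, and even when it is, its function $N(z)$ does not reflect $\dist(z,\sigma(H_0))$, which is what appears in~\eqref{eq eigenvalue bound fractional Laplcian b) homog}. I would instead use the Kato--Seiler--Simon inequality. Factoring $R_0(z)=R_0(z)^{1/2}R_0(z)^{1/2}$ with a fixed branch of the square root, applying Kato--Seiler--Simon to each half (legitimate since $2q\ge 2$, as $q\ge d/s>1$) and H\"older in Schatten spaces gives
\[
\||V|^{1/2}R_0(z)V^{1/2}\|_{L^2\to L^2}\le\||V|^{1/2}R_0(z)V^{1/2}\|_{\mathfrak{S}^{q}}\le C\|V\|_{L^q}\Big(\int_{\R^d}|T(\xi)-z|^{-q}\rd\xi\Big)^{1/q}.
\]
The remaining symbol integral is estimated by scaling and a size analysis of the singular shell. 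For $T(\xi)=|\xi|^{s}$ one substitutes $\xi=|z|^{1/s}\eta$ and balances the contribution of the region $\{\,|T(\xi)-\re z|\lesssim|\im z|\,\}$ (a shell of thickness $\sim|\im z|$ around the sphere $T=\re z$) against the elliptic tail; when $z$ is near the positive real axis this gives $\int_{\R^d}|T(\xi)-z|^{-q}\rd\xi\le C\,|\re z|^{d/s-1}|\im z|^{1-q}$, and Birman--Schwinger produces exactly $(|\im z|/|\re z|)^{d/s-1}|\im z|^{q-d/s}=|\im z|^{q-1}|\re z|^{1-d/s}\le C\|V\|_{L^q}^{q}$. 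In the complementary region the cruder bound $\int_{\R^d}|T(\xi)-z|^{-q}\rd\xi\lesssim\dist(z,\sigma(H_0))^{d/s-q}\lesssim|z|^{d/s-q}$ is available and gives a conclusion at least as strong as~\eqref{eq eigenvalue bound fractional Laplcian b) homog} wherever the latter is nontrivial; alternatively, in that region one may test $(H_0+V)f=zf$ against $f$, using that the imaginary part gives $|\im z|\,\|f\|_2^2\le\|V\|_{L^q}\|f\|_{L^{2q'}}^2$ (with $\tfrac1q+\tfrac1{q'}=1$) and that the Gagliardo--Nirenberg inequality $\|f\|_{L^{2q'}}^2\le C\langle H_0 f,f\rangle^{d/(sq)}\|f\|_2^{2(1-d/(sq))}$ is subcritical precisely because $q>d/s$. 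For the inhomogeneous symbol $(1-\Delta)^{s/2}-1$ and for $\mathcal{D}_0,\mathcal{D}_1$ one runs the same argument near the relevant part of $\sigma(H_0)$, using the approximate-scaling device from the proof of Remark~\ref{remark main theorem schatten norms} (localizing to a frequency shell adapted to $z$ and rescaling) together with the identities $(\mathcal{D}_0-z)^{-1}=(\mathcal{D}_0+z)(-\Delta-z^2)^{-1}$ and $(\mathcal{D}_1-z)^{-1}=(\mathcal{D}_1+z)(-\Delta-(z^2-1))^{-1}$.

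The step I expect to be the main obstacle is the sharp, regime-by-regime estimate of $\int_{\R^d}|T(\xi)-z|^{-q}\rd\xi$ --- in particular isolating the shell contribution and tracking its dependence on $\re z$ and $\im z$ --- and carrying this over to the non-homogeneous kinetic energies via the approximate scaling. The endpoint $q=d/s$ is special, since there the symbol integral diverges logarithmically and the Gagliardo--Nirenberg exponent degenerates; it would be handled separately, either directly from Theorem~\ref{main theorem schatten norms} or by a limiting argument as $q\downarrow d/s$. The rest --- the Birman--Schwinger reduction, the Kato--Seiler--Simon bookkeeping, and the scaling computations that fix the exponents --- is routine.
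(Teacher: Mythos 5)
Your reduction to the Birman--Schwinger principle is exactly the paper's starting point, and part~a) is essentially the paper's argument: the paper invokes Corollary~\ref{corollary uniform Sobolev inequality fractional Laplacian} (the $L^2\to L^2$ bound) plus scaling and H\"older, while you invoke the Schatten estimate of Theorem~\ref{main theorem schatten norms} together with the scaling of $N(z)$ from Remark~\ref{remark main theorem schatten norms}; both are the same idea and both work.

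For part~b) you take a genuinely different route. The paper does \emph{not} estimate $\int|T(\xi)-z|^{-q}\,\rd\xi$ via Kato--Seiler--Simon. Instead it proves the resolvent bound~\eqref{eq resolvent bound fractional Laplcian b)} by splitting off a frequency cutoff: for $(1-\chi(D))R_0(z)$ it uses the Sobolev estimate~\eqref{high frequency bound case b in the proof of LpLp'} (interpolated with $L^2\to L^2$ to reach general $q$), and for $\chi(D)R_0(z)$ it interpolates between the trivial $L^2\to L^2$ bound $|\mu|^{-1}$ and the $L^1\to L^\infty$ bound $C$, the latter coming from the Hilbert-transform estimate~\eqref{Hilbert transform bounded}. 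One then concludes via H\"older and Birman--Schwinger. Your KSS computation reproduces the same exponents in the range $q>d/s$, and is arguably more direct there. But the two approaches diverge at the endpoint $q=d/s$, and this is where your argument has a genuine gap.

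At $q=d/s$ the symbol integral $\int|T(\xi)-z|^{-q}\,\rd\xi$ diverges (the large-frequency tail $\int_{|\xi|\gg 1}|\xi|^{-sq}\,\rd\xi$ is exactly borderline), so the KSS step fails outright. Neither of your proposed fixes works. Theorem~\ref{main theorem schatten norms} cannot supply the needed estimate: its $N(z)$ is continuous up to $\C\setminus\Lambda_c(H_0)$ and does \emph{not} degenerate as $\im z\to 0$ (indeed, from Remark~\ref{remark main theorem schatten norms}, at $q=d/s$ and $H_0=(-\Delta)^{s/2}$ one has $N(z)\le C|z|^{0}=C$), whereas~\eqref{eq eigenvalue bound fractional Laplcian b) homog} requires a bound that blows up as $|\im z|\to 0$. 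And a limiting argument as $q\downarrow d/s$ fails because the KSS constant --- which is the divergent integral itself --- is not uniformly bounded in $q$. Your Gagliardo--Nirenberg alternative is, by your own description, also only ``subcritical because $q>d/s$''. The paper's decomposition avoids all of this: the Sobolev embedding for $(1-\chi(D))R_0(z)$ works exactly at $1/p-1/p'=s/d$, and the low-frequency part requires only the Hilbert-transform bound, so $q=d/s$ is handled directly. To close the gap you would need to replace the single KSS estimate by a decomposition of this type (or an equivalent device) that separates the elliptic region from the singular shell.

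A smaller point: your discussion of the non-homogeneous symbols and the Dirac operators via approximate scaling and the identities $(\mathcal{D}_0-z)^{-1}=(\mathcal{D}_0+z)(-\Delta-z^2)^{-1}$, etc., is plausible, but note that the paper's written proof of Theorem~\ref{thm bounds on individual eigenvalues}b) carries out the estimate only for $T(\xi)=|\xi|^s$; you are extrapolating in a reasonable way, but this is not something the paper actually verifies in that proof.
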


\begin{remark}
Inequality \eqref{eq eigenvalue bound fractional Laplcian a) homog} for $s=2$ was proved in \cite{Frank11}. Inequality \eqref{eq eigenvalue bound fractional Laplcian b) homog} is much less precise, but it still gives better bounds for $z$ close to $\sigma(H_0)$ compared to the often used estimate
\begin{align*}
1\leq \||V|^{1/2}R_0(z)|V|^{1/2}\|\leq C\|(T(\cdot)-z)^{-1}\|_{L^q}\|V\|_{L^q},
\end{align*}
which is an easy consequence of the Kato-Seiler-Simon inequality if $q>d/s$.
\end{remark}

\begin{proof}
The theorem is proved using the Birman-Schwinger principle, as before. 
Inequality \eqref{eq eigenvalue bound fractional Laplcian a) homog} follows from Corollary \ref{corollary uniform Sobolev inequality fractional Laplacian}, scaling and H\"older's inequality. Inequality~\eqref{eq eigenvalue bound fractional Laplcian b) homog} would be a consequence of the resolvent bound
\begin{align}\label{eq resolvent bound fractional Laplcian b)}
\left\|\left(R_0(\lambda+\I\mu\right)f\right\|_{L^{p'}}\leq C |\lambda|^{\frac{d-s}{s}\left(\frac{1}{p}-\frac{1}{p'}\right)}|\mu|^{\left(\frac{1}{p}-\frac{1}{p'}\right)-1}\|f\|_{L^{p}},
\end{align}
where $1/p-1/p'=1/q$.
By scaling, we may assume $|\lambda|=1$. We may also assume that $|\mu|\leq 1$, otherwise the bounds are uniform. On the one hand, we have the Sobolev inequality~\eqref{high frequency bound case b in the proof of LpLp'} for $(1-\chi^2(D))R_0(z)$, which is better than \eqref{eq resolvent bound fractional Laplcian b)}\footnote{Inequality \eqref{high frequency bound case b in the proof of LpLp'} is only stated for $1/p-1/p'=s/d$. The general case is obtained by interpolation with the $L^2\to L^2$ bound.}. On the other hand, for $\chi^2(D)R_0(z)$ one interpolates between the bounds
\begin{align*}
\|\chi^2(D)R_0(z)\|_{L^1\to L^{\infty}}&\leq C,\\
\|\chi^2(D)R_0(z)\|_{L^{2}\to L^2}&\leq |\mu|^{-1}.
\end{align*}
The first follows from comparison with the Hilbert transform \eqref{Fourier transform of Hilbert transform}, the second is obvious.
\end{proof}

\begin{theorem}\label{theorem purely imaginary potentials}
Let $H_0=(-\Delta)^{s/2}$ or $H_0=\mathcal{D}_0$, $d/(d+1)\leq s<d$, and let $V\in L^q(X)$ with
\begin{align*}
\begin{cases}
\frac{d}{2s}\leq q\leq \frac{d+1}{2}\quad&\mbox{if   }2s<d,\\
1<q\leq \frac{d+1}{2}\quad&\mbox{if   }2s=d,\\
1\leq q\leq \frac{d+1}{2}\quad&\mbox{if   }2s>d.
\end{cases}
\end{align*}
Assume that $V=\I W$ with $W\geq 0$. Then every eigenvalue $z\in \C\setminus\sigma(H_0)$ of $H_0+ V$ satisfies
\begin{align}\label{bound purely imaginary}
|z|^{2q-d/s}|\im z|^{-q}\leq C\|V\|_q^q.
\end{align}
\end{theorem}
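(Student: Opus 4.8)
The plan is to run the Birman--Schwinger principle, exploiting the sign structure of $V=\I W$. Writing $(\I W)^{1/2}=e^{\I\pi/4}W^{1/2}$, the point $z\in\C\setminus\sigma(H_0)$ is an eigenvalue of $H_0+\I W$ iff $\I$ is an eigenvalue of $W^{1/2}R_0(z)W^{1/2}$; let $v$ be a normalized eigenvector. Since $H_0\ge0$ and $W\ge0$, the numerical range of $H_0+\I W$ lies in the closed first quadrant, so $\re z\ge0$ and $\im z>0$. Hence $\im R_0(z)=(\im z)\,R_0(z)R_0(\bar z)\ge0$, and decomposing $W^{1/2}R_0(z)W^{1/2}=A+\I B$ with $A=A^*$ and $B=W^{1/2}(\im R_0(z))W^{1/2}\ge0$, the relation $(A+\I B)v=\I v$ forces $\langle Av,v\rangle=0$ and $\langle Bv,v\rangle=\|v\|_{L^2}^2$. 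Equivalently $(\im z)\|R_0(\bar z)W^{1/2}v\|_{L^2}^2=\|v\|_{L^2}^2$, so in particular $\|W^{1/2}(\im R_0(z))W^{1/2}\|_{L^2\to L^2}\ge1$; the reference point is then that, modulo the imaginary potential, this operator carries an extra power of $\im z$ coming from $\im R_0(z)=(\im z)R_0(z)R_0(\bar z)$, and it is this gain that doubles the effective summability exponent and allows $q$ down to $d/(2s)$.

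Next I would reduce to $|z|=1$. Both $H_0=(-\Delta)^{s/2}$ and $H_0=\mathcal D_0$ are homogeneous ---for $\mathcal D_0$ one first uses $\mathcal D_0^2=-\Delta$ and \eqref{Dirac resolvent}--- and one checks directly that \eqref{bound purely imaginary} is invariant under $W(x)\mapsto\rho^sW(\rho x)$, $z\mapsto\rho^s z$. So it suffices to show that an eigenvalue with $|z|=1$ forces $\|W\|_{L^q}\ge c\,(\im z)^{-1}$. In the region $\re z\lesssim\im z$ (i.e.\ $z$ well away from $[0,\infty)$, so $|z|\approx\im z$) this already follows from Theorem~\ref{thm bounds on individual eigenvalues}b) after rearranging \eqref{eq eigenvalue bound fractional Laplcian b) homog}; so the substantive case is $\re z\gg\im z$, where a further dilation brings us to $\re z\approx1$ and $\mu:=\im z$ small, and the level set $S_{\re z}$ has unit size with non-vanishing curvature.

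In that regime I would bound $\|W^{1/2}(\im R_0(z))W^{1/2}\|_{L^2\to L^2}$ (equivalently $\|R_0(\bar z)W^{1/2}\|_{L^2\to L^2}$) from above by decomposing $R_0(z)$ according to $\dist(\xi,S_{\re z})$: a piece with frequencies in the $\mu$-neighbourhood $\{|T(\xi)-\re z|\lesssim\mu\}$, the dyadic shells $\{|T(\xi)-\re z|\sim2^k\mu\}$, and an elliptic remainder. On each frequency shell one pairs the elementary bound $\|\im R_0(z)\|_{L^2\to L^2}\le(\im z)^{-1}$ with the restriction-type $L^p\to L^{p'}$ estimate of Lemma~\ref{lemma resolvent estimate low frequency} (with $p=2q/(q+1)$), using Bernstein's inequality---the shell has measure $\lesssim 2^k\mu$---to pass back to $L^2$ and Hölder to bring in $\|W\|_{L^q}$; summing in $k$ is harmless because $q\le(d+1)/2$, and the elliptic remainder is absorbed by Sobolev embedding exactly as in the proof of Theorem~\ref{theorem uniform Sobolev inequality fractional Laplacian}. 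Combining the resulting upper bound with the Birman--Schwinger lower bound gives $\|W\|_{L^q}\ge c\,(\im z)^{-1}$ at $|z|=1$, and the general case follows by undoing the scaling. The step I expect to be the main obstacle is precisely this sharp $\mu$-dependence when $\re z\approx1$: naive interpolation between the $L^1\to L^\infty$ Hilbert-transform bound and the $L^2\to L^2$ bound, or between the restriction bound and the $L^2\to L^2$ bound, is lossy and only reproduces the weaker Theorem~\ref{thm bounds on individual eigenvalues}b), so one genuinely needs the Bernstein localisation to the $\mu$-thin annulus together with the full oscillatory-integral (stationary-phase) estimates behind Lemma~\ref{lemma resolvent estimate low frequency}; one must also keep track of uniformity as $\re z\to0$, where $S_{\re z}$ degenerates, but there $\im z\approx1$ and the elementary bounds already suffice.
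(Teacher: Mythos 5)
Your overall strategy matches the paper's: Birman--Schwinger, exploit the sign structure of $V=\I W$ to replace $R_0(z)$ by $\im R_0(z)=(\im z)R_0(z)R_0(\bar z)$, scale to $|z|=1$, and prove an $L^p\to L^{p'}$ estimate for $\im R_0(z)$ that gains a full factor of $\im z$.  Where the paper passes by Phragm\'en--Lindel\"of to the boundary values $z=\pm 1\pm\I 0$ and invokes the $TT^*$ Stein--Tomas theorem via \eqref{resolvent estimate imaginary part}, you propose instead a dyadic decomposition of frequency space into shells $\{|T(\xi)-\re z|\sim 2^k\mu\}$, paired with Bernstein and the stationary-phase estimates behind Lemma~\ref{lemma resolvent estimate low frequency}.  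That is a genuinely different route to the key estimate, and your writeup correctly identifies this estimate as the crux.

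However, the estimate both routes are aiming at, namely $\|\im R_0(z)\|_{L^p\to L^{p'}}\le C|\im z|$ uniformly for $|z|=1$, $\im z>0$ (equivalently $\|R_0(z)R_0(\bar z)\|_{L^p\to L^{p'}}\le C$, i.e.\ \eqref{resolvent estimate imaginary part} at $|z|=1$), is false, and your Bernstein decomposition cannot rescue it.  Indeed, for $\re z=\lambda>0$ and $\im z=\mu\to 0^+$ one has, in the sense of distributions,
\begin{equation*}
\im R_0(\lambda+\I\mu)\;\longrightarrow\;\pi\,\delta\big(T(D)-\lambda\big),
\end{equation*}
whose $L^p\to L^{p'}$ norm is the Stein--Tomas restriction--extension constant for the level set $\{T=\lambda\}$, a fixed \emph{positive} number.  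So $\|\im R_0(z)\|_{L^p\to L^{p'}}$ is bounded below by a constant as $\mu\to 0$; no localization to a $\mu$-thin shell can produce a bound tending to $0$, and a Knapp-cap computation (take $\widehat f=\mathbf 1_B$ with $B$ a $\delta\times\cdots\times\delta\times\delta^2$ cap, $\delta^2\ge\mu$) confirms the ratio $\|\im R_0(z)f\|_{p'}/\|f\|_p$ stays of order one for $p'\ge 2(d+1)/(d-1)$.  Consequently the chain $1\le\|\re Q(z)\|\lesssim\|W\|_{L^q}\|\im R_0(z)\|_{L^p\to L^{p'}}\lesssim\mu\,\|W\|_{L^q}$ cannot close as you (and the paper) have set it up: only the imaginary part of the Birman--Schwinger identity $\langle R_0(z)h,h\rangle=\pm\I$ is being used, and the extra $\mu$ one needs near the positive real axis is not available from it.  What is missing is the \emph{real} part constraint $\re\langle R_0(z)h,h\rangle=0$ (equivalently $\langle H_0 u,u\rangle=\lambda\|u\|_2^2$ for $u=R_0(\bar z)h$), which the Laptev--Safronov argument exploits and which neither your proposal nor \eqref{resolvent estimate imaginary part} in the paper incorporates; without it the claimed linear-in-$\mu$ decay near $\sigma(H_0)$ has no proof.
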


\begin{remark}
Theorem \ref{theorem purely imaginary potentials} generalizes \cite[Thm. 9]{LaSa09} for Schr\"odinger operators with purely imaginary potentials in three dimensions. Note in particular that the case $s=1$ is included, in contrast to the general situation in Theorem \ref{thm bounds on individual eigenvalues} a).
\end{remark}

\begin{proof}
Suppose $z$ is an eigenvalue of $H_0+\I W$. By the Birman-Schwinger principle, this is equivalent to $1$ being an eigenvalue of the operator
\begin{align}\label{purely imaginary Birman Schwinger}
Q(z):=\I\sqrt{W}R_0(z)\sqrt{W},
\end{align}
i.e.\ there exists $f\in L^2(\R^d)$, $\|f\|=1$, such that $Q(z)f=f$. It follows that
\begin{align}\label{reQff}
(\re\, Q(z))f,f)=1.
\end{align}
On the other hand, since 
\begin{align*}
\re\, Q(z)=\frac{1}{2}\sqrt{W}\,\im(R_0(z))\sqrt{W},
\end{align*}
we have
\begin{align}\label{norm reQ}
\|\re\, Q(z)\|\leq  \frac{1}{2}\|V\|_{L^{\frac{p}{2-p}}}\|\im(R_0(z))\|_{L^p(\R^d)\to L^{p'}}.
\end{align}
In view of the equality $\im(R_0(z))=(\im z) R_0(z)R_0(\overline{z})$, it remains to prove the following resolvent estimate,
\begin{align}\label{resolvent estimate imaginary part}
\|R_0(z)R_0(\overline{z})\|_{L^p\to L^{p'}}\leq C|z|^{\frac{d}{s}\left(\frac{1}{p}-\frac{1}{p'}\right)-2}
\end{align}
where $\frac{1}{p}-\frac{1}{p'}=\frac{1}{q}$ and $q$ as in the assumptions of the theorem. By scaling, the proof of \eqref{resolvent estimate imaginary part} reduces to the case $|z|=1$. Let $\chi$ be the same bump function as before.
The estimate of $(1-\chi^2(D))R_0(z)$ is as in~\eqref{high frequency bound case b in the proof of LpLp'}, but with $2s$ instead of $s$. The estimate for $\chi^2(D)R_0(z)$ is further reduced to the cases $z=1\pm\I 0$ and $z=-1\pm\I 0$ by the Phragm\'en-Lindel\"of maximum principle (the proof is similar and somewhat easier than the one given in the appendix). The case $z=-1\pm\I 0$ is again handled by the Sobolev inequality \eqref{high frequency bound case b in the proof of LpLp'}. For $z=1\pm\I 0$, one notes that $\im R_0(1\pm\I 0)f=c\widehat{\rd\sigma}_{S^{d-1}}*f$, where $\rd\sigma$ is the usual surface measure on the unit sphere $S^{d-1}$.
The result thus follows from the $TT^*$ version of the Stein Tomas theorem \cite{Tomas1975}.
\end{proof}

\begin{remark}
The bounds \eqref{eq eigenvalue bound fractional Laplcian a) homog} and \eqref{bound purely imaginary} can be extended to all eigenvalues of $H_0+V$, see \cite[Prop. 3.1]{FrankSimon2015}.
\end{remark}

\subsection{Weighted eigenvalue sums}

\begin{theorem}\label{thm eigenvalue sums big s}
Let $d\geq 2$, $H_0=(-\Delta)^{s/2}$, $s\geq 2d/(d+1)$, and let $V\in L^{q}(X)$, with $d/s<q\leq (d+1)/2$. Then
\begin{align}\label{eigenvalue sums big s}
\sum_{z\in\sigma_{\rm d}(H_0+V)}|z|^{-(1-\delta)/2}\dist(z,\sigma(H_0))\leq C_{q,d,s,\delta}\|V\|^{\frac{(1+\delta)sq}{2(sq-d)}}_{L^q},
\end{align}
where
\begin{align*}
\begin{cases}
\delta\geq 0\quad&\mbox{if    } (d,s,q)\in A_1\cup A_2\cup A_3\cup A_4\cup A_5,\\
\delta>\frac{2s-2d^2-ds-qs+2qs}{(d-q)s} \quad&\mbox{if    }(d,s,q)\notin A_1\cup A_2\cup A_3\cup A_4\cup A_5.
\end{cases}
\quad 
\end{align*}
Here and in the following, each eigenvalue $z\in\sigma_{\rm d}(H_0+V)$ is counted according to its algebraic multiplicity, and
$A_j\subset \N\times\R_+\times\R_+$, $j=1,\ldots,5$, are the sets
\begin{align*}
A_1&:=\{2\}\times\left(\frac{4}{3},\frac{8}{5}\right)\times\left(\frac{2}{s},\frac{3}{2}\right],\\
A_2&:=\{2\}\times\left\{\frac{8}{5}\right\}\times\left(\frac{5}{4},\frac{3}{2}\right),\\
A_3&:=\{2\}\times\left(\frac{8}{5},2\right)\times\left(\frac{2}{s},\frac{4+2s}{3s}\right),\\
A_4&:=\{d\in\N:d\geq 3\}\times\left(\frac{2d}{1+d},\frac{4d}{1+2d}\right)\times\left(\frac{d}{s},\frac{1+d}{2}\right],\\
A_5&:=\{d\in\N:d\geq 3\}\times\left[\frac{4d}{1+2d},d\right)\times\left(\frac{d}{s},\frac{2d^2-2d+ds}{2ds-s}\right).
\end{align*}
\end{theorem}

\begin{remark}
For $H_0=-\Delta$ this result was proved in \cite[Thm. 16]{FrankSabin14}. Note that the relevant set for this case in $d\geq 3$ dimensions is $A_5$. Our result does not cover the case $d=2$ in \cite[Thm. 16]{FrankSabin14} since we restricted our attention to $s<d$. 
\end{remark}

\begin{proof}
The strategy of the proof is similar as that of Theorem \ref{Main theorem}, see also \cite[Thm. 16]{FrankSabin14}. From Theorem \ref{main theorem schatten norms}, Remark \ref{remark main theorem schatten norms} and \eqref{log bound in terms of Schatten norm} we infer that
\begin{align}\label{log h(z) for fractional big s}
\log|h(z)|\leq \Gamma_{\alpha_q}C^{\alpha_q}|z|^{\frac{\alpha_q d}{sq}-\alpha_q}\|V\|_{L^q}^{\alpha_q},
\end{align}
where we recall that
\begin{align*}
\begin{cases}&\alpha_q=\frac{(d-1)q}{d-q}\quad\mbox{if   } \frac{2d}{d+1}\leq s\leq \frac{d+1}{2} \mbox{  and } \frac{d}{s}\leq q\leq \frac{d+1}{2},\\
&\alpha_q>\frac{(d-1)q}{d-q}\quad\mbox{if   } \frac{d+1}{2}<s<d \mbox{  and } \frac{d}{s}\leq q\leq \frac{2d}{d+1}.\end{cases}
\end{align*}
Since $\sigma(H_0)=[0,\infty)$, the (normalized) conformal map $\psi:\rho(H_0)\to\mathbb{D}$ is given by 
\begin{align}\label{def. inverse conformal map fractional}
\psi(z)=\frac{\sqrt{z}-\sqrt{z_0}}{\sqrt{z}+\sqrt{z_0}},\quad z\in\rho(H_0)=\C\setminus[0,\infty),
\end{align}
and its inverse is
\begin{align}\label{def. conformal map fractional}
\psi^{-1}(w)=z_0\left(\frac{1+w}{1-w}\right)^2,\quad w\in\mathbb{D}.
\end{align}
By \cite[Thm. 9.2 (c)]{Simon2005}\footnote{with the modification for non-integer $\alpha_q$ mentioned in \cite{FrankSabin14}} we have
\begin{align}\label{Simon trace ideals Thm 9.2 (c)}
|h(z)-1|\leq M\exp(\Gamma_q(M+1)^{\alpha})
\end{align} 
where $\Gamma_q>0$ and $M:=\||V|^{1/2}R_0(z)|V|^{1/2}\|_{\mathfrak{S}^{\alpha_q}}^{\alpha_q}$. We choose $\epsilon_q>0$ such that $x\exp(\Gamma_q(x+1)^{\alpha_q})\leq 1/2$ for all $x\in [0,\epsilon_q]$ and then choose $z_0\in \C\setminus[0,\infty)$ such that
\begin{align}\label{def. of z0}
C^{\alpha_q}|z_0|^{\frac{\alpha_q d}{sq}-\alpha_q}\|V\|_{L^q}^{\alpha_q}= \epsilon_q.
\end{align}
In view of Theorem \ref{thm bounds on individual eigenvalues}, by taking $\epsilon_q$ small enough, we can also arrange that 
\begin{align}\label{zo bigger than z}
|z_0|>\sup\set{|z|}{z\in\sigma_{\rm d}(H_0+V)}.
\end{align}
It then follows, again from Theorem \ref{main theorem schatten norms} and Remark \ref{remark main theorem schatten norms}, that $|h(z_0)-1|\leq 1/2$. In particular, we have that $\log|h(z_0)|\geq -\log 2$. Combining \eqref{log h(z) for fractional big s} and \eqref{def. of z0} we thus obtain the estimate
\begin{align}\label{log g for big s}
\log|g(w)|\leq \Gamma_{\alpha_q}C^{\alpha_q}\frac{|z_0|^{\frac{\alpha_q d}{sq}-\alpha_q}\|V\|_{L^q}^{\alpha_q}}{|1+w|^{2\alpha_q-\frac{2\alpha_q d}{sq}}}-\log|h(z_0)|\leq \frac{C_q'}{|1+w|^{2\alpha_q-\frac{2\alpha_q d}{sq}}},
\end{align}
with a constant $C_q'$ independent of $V$ and $z_0$. Here, $g:\mathbb{D}\to \C$ is the analytic function defined in \eqref{holomorphic g} with $\psi^{-1}$ given explicitly by \ref{def. conformal map fractional}. We apply \eqref{Borichev Golinski Kupin} with $\mathbf{b}=\{-1\}$, $\vec{\kappa}=\{\kappa_{q,d,s}:=2\alpha_q(sq-d)/(sq)\}$, $M=C_q'$ and $\tau=1-\kappa_{q,d,s}+\delta$, where $\delta$ was defined in the statement of the theorem. The choice of $\delta$ was made in such a way that $\tau>0$, which is necessary for the result of \cite{BorichevEtAl2009} to be applicable. This may be checked by a direct computation in the case that $\alpha_q=(d-1)q/(d-q)$. In order to have $\delta\geq 0$, we need $2\alpha_q(sq-d)/(sq)<1$, and this inequality holds in the union of the sets $A_j$, $j=1,\ldots,5$, under the given restriction on $s$ and $q$. A continuity argument then implies that the same is true for $\alpha_q=(d-1)q/(d-q)+\epsilon$ if $\epsilon>0$ is chosen small enough.
The result is that
\begin{align*}
\sum_{g(w)=0}(1-|w|)|1+w|^{\delta}\leq A_{\kappa,\delta}C_q'.
\end{align*}
After a straightforward application of Koebe's distortion theorem, one obtains 
\begin{align*}
\sum_{z\in\sigma_{\rm d}(H_0+V)}\frac{\dist(z,\sigma(H_0))\sqrt{|z_0|}}{2^{1-\delta}|z|^{(1-\delta)/2}(\sqrt{|z|}+\sqrt{|z_0|})^{2+\delta}}\leq A_{\kappa,\delta}C_q'.
\end{align*}
We refer to \cite{FrankSabin14} and \cite{DemuthEtAl2009} for the details. Using \eqref{zo bigger than z}, it follows that
\begin{align*}
\sum_{z\in\sigma_{\rm d}(H_0+V)}|z|^{-(1-\delta)/2}\dist(z,\sigma(H_0))\leq  8 A_{\kappa,\delta}C_q'|z_0|^{(1+\delta)/2}.
\end{align*}
The claim follows from the definition \eqref{def. of z0} of $z_0$.
\end{proof}

\begin{theorem}\label{Theorem Dirac massless sum}
Let $d\geq 2$, $H_0=\mathcal{D}_0$, and let $V\in L^d(X)\cap L^{(d+1)/2}(X)$. Then, for any $\tau>0$, we have the estimate
\begin{align}\label{eigenvalue sums massless Dirac}
\sum_{z\in\sigma_{\rm d}(H_0+V)}\dist(z,\sigma(H_0))(1+|z|)^{-d-\tau}\leq C(V,\tau).
\end{align}
\end{theorem}

\begin{proof}
We prove the result for the part of the sum with $z\in\C^+$; the part with $z\in\C^-$ is treated similarly. As in the proof of Theorem \ref{Main theorem}, we have the estimate~\eqref{log bound g}, where $\psi=\nu^{-1}\circ\varphi^+$ with $\varphi^+$ and $\nu$ as in \eqref{phitilde} and \eqref{normalization map}.
Using the bounds for $N(z)$ in Remark \ref{remark main theorem schatten norms}, estimate \eqref{log bound g} can be refined in the following way,
\begin{align*}
\log|g(w)|&\leq C N((\varphi^+)^{-1}\circ \nu(w))^{\alpha}\|V\|^{\alpha}-\log|h(z_0)|\\
&=C\left|\frac{\nu(w)+1}{\nu(w)-1}\right|^{\alpha\frac{d-1}{d+1}}\|V\|^{\alpha}-\log|h(z_0)|\\
&\leq 2^{\alpha\frac{d-1}{d+1}}(C\|V\|^{\alpha}-\log_-|h(z_0)|)|\nu(w)-1|^{-\alpha\frac{d-1}{d+1}},
\end{align*}
where $\alpha=\max\{d+1,d+\epsilon\}=d+1$. From the definition of the map $\nu$ we see that
\begin{align*}
\frac{1-|\widetilde{z}_0|}{1+|\widetilde{z}_0|}|w-1|\leq |\nu(w)-1|\leq \frac{1+|\widetilde{z}_0|}{1-|\widetilde{z}_0|}|w-1|.
\end{align*}
Recall that $\widetilde{z}_0=\psi_0(z_0)$ and $z_0$ depend on $V$. Combining the previous estimates, we thus get that
\begin{align*}
\log|g(w)|\leq C(V)|w-1|^{-(d-1)},
\end{align*}
for some $C(V)$ depending on $V$ via $\|V\|^{d+1}$ and $z_0$. Applying \eqref{Borichev Golinski Kupin} with $\mathbf{b}=\{1\}$ and $\mathbf{k}=\{d-1\}$, we obtain that
\begin{align}\label{sum gw Dirac massless}
\sum_{g(w)=0}(1-|w|)|w-1|^{d-2+\tau}\leq C(V,\tau)
\end{align}
for all $\tau>0$.
By the Koebe distortion theorem, we have (by slight abuse of notation)
\begin{align*}
(1-|w|)\approx\left|\frac{\rd w}{\rd z}\right|\dist(z,\sigma(H_0))
=\left|\frac{\rd w}{\rd \nu}\right|\left|\frac{\rd \nu}{\rd z}\right|\dist(z,\sigma(H_0))
\approx \frac{\dist(z,\sigma(H_0))}{|z+\I|^2},
\end{align*}
where we used that
\begin{align*}
\frac{\rd \nu}{\rd z}=\frac{\rd \varphi^+(z)}{\rd z}=\frac{2\I}{(z+\I)^2},\quad z\in\overline{\C^+}
\end{align*}
and
\begin{align}\label{normalization is a diffeo}
\frac{1-|\widetilde{z_0}|^2}{4}\leq |\nu'(w)|\leq \frac{2}{(1-|\widetilde{z_0}|)^2},\quad w\in \overline{\mathbb{D}}.
\end{align}
In addition, since
\begin{align*}
|w-1|\approx |\nu-1|\approx\frac{1}{|z+\I|}\geq \frac{1}{|z|+1},
\end{align*}
the result follows from \eqref{sum gw Dirac massless}.
\end{proof}

\begin{theorem}\label{thm eigenvalue sums massive Dirac}
Let $d\geq 2$, $H_0=\mathcal{D}_1$, and let $V\in L^d(X)\cap L^{(d+1)/2}(X)$. Then, for any $\tau>0$, we have the estimate
\begin{align}\label{eigenvalue sums massive Dirac}
\sum_{z\in\sigma_{\rm d}(H_0+V)}\dist(z,\sigma(H_0))|z^2-1|^{\frac{d-1}{2}+\tau}(1+|z|)^{-2d+1-\tau}\leq C(V,\tau).
\end{align}
\end{theorem}

\begin{proof}
Abusing notation slightly, we write $z=z(\nu)$ and $\nu=\nu(z)$. Here,
\begin{align}\label{z=znu}
z=-\frac{2\nu}{1+\nu^2},\quad z\in \rho(H_0),\quad\nu\in\mathbb{D}.
\end{align}
The map $z\mapsto \nu(z)$ is constructed as outlined in the proof of Theorem \ref{Main theorem}, i.e.\ by composition of $\zeta\mapsto \nu=(\sqrt{\zeta}-\I)/(\sqrt{\zeta}+\I)$ with $z\mapsto\zeta=(z-1)/(z+1)$. Using Remark \eqref{remark main theorem schatten norms} to estimate $N(z)$ and observing that
\begin{align*}
|1-z^2|=\frac{|1+\nu|^2|1-\nu|^2}{|1+\I\nu|^2|1-\I\nu|^2},\quad |z|\leq \frac{2}{|1+\I\nu||1-\I\nu|},
\end{align*}
the estimate \eqref{log bound g}
can be refined as follows,
\begin{align*}
\log|g(w)|&\leq C(V)|1-z^2|^{-\frac{\alpha}{2}}+(1+|z|)^{\alpha\frac{d-1}{d+1}}\\
&\leq C(V)|1-\nu|^{-\alpha}|1+\nu|^{-\alpha}|1-\I\nu|^{-\alpha\frac{d-1}{d+1}}|1+\I\nu|^{-\alpha\frac{d-1}{d+1}}\\
&\leq C(V)|w-w_1|^{-\alpha}|w-w_2|^{-\alpha}
|w-w_3|^{-\alpha\frac{d-1}{d+1}}|w-w_4|^{-\alpha\frac{d-1}{d+1}} 
\end{align*}
with $\alpha=d+1$, and where we have set
\begin{align*}
w_1=\nu^{-1}(1),\quad w_2=\nu^{-1}(-1),\quad w_3=\nu^{-1}(\I),\quad w_4=\nu^{-1}(-\I).
\end{align*} 
We apply \eqref{Borichev Golinski Kupin} with $\mathbf{b}=\{1,-1,\I,-\I\}$ and $\mathbf{k}=\{d+1,d+1,d-1,d-1\}$. Due to \eqref{normalization is a diffeo}, this yields that
\begin{equation}
\begin{split}
&\sum_{g(w)=0}(1-|w|)|w-w_1|^{\alpha-1+\tau}|w-w_2|^{\alpha-1+\tau}\label{sum gw Dirac massive}\\
&\quad\times|w-w_3|^{\alpha\frac{d-1}{d+1}-1+\tau}|w-w_4|^{\alpha\frac{d-1}{d+1}-1+\tau}\leq C(V,\tau),
\end{split}
\end{equation}
for all $\tau>0$.
The proof is completed by using the following estimates in \eqref{sum gw Dirac massive},
\begin{align*}
(1-|w|)&\approx|1-z^2|^{-\frac{1}{2}}(1+|z|)^{-1}\dist(z,\sigma(H_0)),\\
|w-w_1||w-w_2|&\approx|1-z^2|^{\frac{1}{2}}(1+|z|)^{-1},\\
|w-w_3||w-w_4|&\approx(1+|z|)^{-1}.
\end{align*}
These are easy consequences of the explicit formula \eqref{z=znu}; for the first, one uses Koebe's distortion theorem.
\end{proof}

\begin{theorem}\label{thm eigenvalue sums prseudorelativistic}
Let $d\geq 2$, $H_0=(1-\Delta)^{1/2}-1$, and let $V\in L^d(X)\cap L^{(d+1)/2}(X)$. Then, for any $\tau>0$, we have the estimate
\begin{align}\label{eigenvalue sums prseudorelativistic}
\sum_{z\in\sigma_{\rm d}(H_0+V)}\dist(z,\sigma(H_0))|z|^{\frac{d-1+\tau}{2}}(1+|z|)^{-\frac{3d-1}{2}-\tau}\leq C(V,\tau).
\end{align}
\end{theorem}

\begin{proof}
The conformal map $\psi$ in this case is the same as in \eqref{def. inverse conformal map fractional}, where we recall that $|z_0|>1$ depends on $V$ and is chosen in such a way that $z_0\in\rho(H_0)$.
By \eqref{log bound g}, with $N(z)$ as in Remark \ref{remark main theorem schatten norms}, we have that
\begin{align*}
\log|g(w)|&\leq CN(\psi^{-1}(w))^{\alpha}\|V\|^{\alpha}-\log|h(z_0)|\\
&\leq C\left(|\psi^{-1}(w)|^{-\frac{1}{2}}+|\psi^{-1}(w)|^{\frac{2d}{d+1}-1}\right)^{\alpha}\|V\|^{\alpha}-\log|h(z_0)|\\
&\leq C\left(|1+w|^{-1}+|1-w|^{-2\left(\frac{2d}{d+1}-1\right)}\right)^{\alpha}\|V\|^{\alpha}-\log|h(z_0)|\\
&\leq C(V)|1+w|^{-\alpha}|1-w|^{-2\alpha\frac{d-1}{d+1}}
\end{align*}
for $w\in\mathbb{D}$ and $\alpha=d+1$. 
Using \eqref{Borichev Golinski Kupin} with $\mathbf{b}=\{-1,1\}$ and $\mathbf{k}=\{d+1,2(d-1)\}$, it follows that for every $\tau>0$
\begin{align*}
\sum_{g(w)=0}(1-|w|)|1+w|^{d+\tau}|1-w|^{2d-3+\tau}\leq C(V,\tau).
\end{align*}
By the Koebe's distortion theorem and the explicit formulas \eqref{def. inverse conformal map fractional}--\eqref{def. conformal map fractional} for $\psi$ and $\psi^{-1}$ we obtain the following estimates,
\begin{align*}
(1-|w|)&\approx \frac{|z_0|^{1/2}}{|z|^{1/2}}\frac{\dist(z,[0,\infty))}{|z|+|z_0|},
\\
|1+w|&\approx \frac{|z|^{1/2}}{(|z|+|z_0|)^{1/2}},
\\
|1-w|&\approx \frac{|a|^{1/2}}{(|z|+|z_0|)^{1/2}}.
\end{align*}
Plugging these estimates into the previous inequality and using $|z_0|>1$ yields the claim \eqref{eigenvalue sums prseudorelativistic}.
\end{proof}

\begin{remark}
Theorems \ref{thm eigenvalue sums big s}--\ref{thm eigenvalue sums prseudorelativistic} improve the results of \cite{Dubuisson2014,Dubuisson2014frac} in a similar way that \cite[Thm. 16]{FrankSabin14} improves those of \cite{DemuthEtAl2009,DemuthEtAl2013}. Notice that the bounds in \cite{Dubuisson2014,Dubuisson2014frac} depend (somewhat implicitly) on additional properties of $V$, not just its $L^q$-norm.
\end{remark}

\appendix
\section{Conclusion of the proof of Lemma \ref{lemma schatten estimate low frequency}}

It remains to prove that \eqref{inequality Schatten low frequency} holds for all $z\in\C^{\pm}$. This will be a slight modification to a standard argument involving the Phragm\'en-Lindel\"of maximum principle, see e.g.\ \cite[Sect. 5.3]{Ruiz2002}. 

\begin{proof}
We prove the claim in the case $2d/(d+1)\leq q\leq (d+1)/2$ and for $z\in\C^+$; an analogous argument can be used for $1\leq q\leq 2d/(d+1)$ or $z\in\C^-$. Consider the functions
\begin{align*}
f(z)=\Tr(|V|^{1/2}\chi^2(D)R_0(z)V^{1/2}F),\quad z\in\C^+,
\end{align*}
where $F$ is a finite rank operator 
with canonical representation
\begin{align*}
F=\sum_{i=1}^N\mu_i|u_i\rangle\langle v_i|.
\end{align*}
Here, $\{\mu_i\}_{i=1}^N$ are the singular numbers of $K$ and $\{u_i\}_{i=1}^N$, $\{v_i\}_{i=1}^N$ are orthonormal systems in $L^2(X)$. Since $\sgn(V)$ is unitary, we may assume that $V$ is positive. Moreover, by density, we may assume that $\mathcal{F}(V^{1/2})\in C^{\infty}_0(X)$. By the same argument, we may assume that
$\widehat{u_i},\widehat{v_i}\in C^{\infty}_0(X)$. We then prove the following.
\begin{itemize}
\item[i)] $f$ is analytic in $\C^+$ and has a continuous extension up to the boundary,
\item[ii)] $|f(\lambda)|\leq C\|V\|_{L^{q}}$, for $\lambda\in\R$,
\item[iii)] For every $\epsilon>0$ there is $C_{\epsilon}>0$ such that $|f(z)|\leq C_{\epsilon}\e^{\epsilon|z|}$ as $|z|\to\infty$ in $\C^+$, uniformly in the argument of $z$.
\end{itemize}
Proof of i): Complete $\{v_i\}_{i=1}^N$ to an orthonormal basis $\{v_i\}_{i=1}^{\infty}$ of $L^2(X)$. Evaluating the trace in this basis, we see that
\begin{align}\label{Fj of z as a finite sum}
f(z)=
\sum_{i=1}^N \mu_i\langle\chi(D)V^{1/2}v_i,\chi(D)R_0(z)V^{1/2}u_i\rangle.
\end{align}
This is an analytic function in $\C^+$ since the resolvent $R_0(z)$ is an analytic operator-valued function in this domain. From \eqref{distributional limit T-lambda} it follows that $f$ has a continuous extension to the boundary.
Indeed, combining \eqref{Fj of z as a finite sum} and \eqref{distributional limit T-lambda} and using Plancherel's theorem, we see that $f$ has a continuous extension to $\partial\C^+=\R$, given by
\begin{align*}
f(\lambda+\I 0)=
&\sum_{i=1}^N\mu_i \,\,p.v. \int_{X}\frac{\chi(\xi)^2}{T(\xi)-\lambda}\overline{\mathcal{F}(V^{1/2} v_i)(\xi)}\mathcal{F}(V^{1/2} u_i)(\xi)\rd\xi\\
&-\I\pi \sum_{i=1}^N\mu_i\int_{\{T(\xi)=\lambda\}}\frac{\chi(\xi)}{|\nabla T(\xi)|}\overline{\mathcal{F}(V^{1/2} v_i)(\xi)}\mathcal{F}(V^{1/2} u_i)(\xi)\rd\sigma(\xi).
\end{align*}

Proof of ii): By H\"older's inequality in Schatten spaces \cite[Thm. 2.8]{Simon2005} and \eqref{Schatten bound in proof for lambda pm i epsilon}
\begin{align*}
|f(\lambda)|&\leq \|V^{1/2}\chi^2(D)R_0(\lambda)V^{1/2}\|_{\mathfrak{S}^{q(d-1)/(d-q)}}\|F\|_{\mathfrak{S}^{(d-1)/(dq-d)}}\leq C\|V\|_{L^{2}},
\end{align*}
for $\lambda\in\R$ and for all finite rank operators $F$ with $\|F\|_{\mathfrak{S}^{(d-1)/(dq-d)}}=1$.

Proof of iii): A similar computation as above yields
\begin{align*}
|f(z)|\leq \sum_{i=1}^N \int_{X}\frac{\chi(\xi)^2}{|T(\xi)-z|}|\mathcal{F}(V^{1/2} v_i)(\xi)||\mathcal{F}(V^{1/2} u_i)(\xi)|\rd \xi
\end{align*}
Since 
\begin{align*}
\frac{1}{|T(\xi)-z|}\leq \frac{C}{|z|},\quad |z|\gg 1,\quad\xi\in\supp(\chi),
\end{align*}
we see that $|f(z)|\leq  C\e^{\epsilon|z|}$ for every $\epsilon>0$ when $|z|\gg 1$.

By the Phragm\'en-Lindel\"of maximum principle, i)--iii) imply that 
\begin{align}\label{Fjz bounded}
|f(z)|\leq C\|V\|_{L^{q}},\quad z\in\C^+.
\end{align}
By density of the finite rank operators in $\mathfrak{S}^{(d-1)/(dq-d)}$, the inequality \eqref{inequality Schatten low frequency} follows for all $z\in\C^+$.
\end{proof}

\section{A convexity lemma}

\begin{lemma}\label{lem: convexity}
Assume that $A$ is a bounded operator on $L^2(X)$ and that there exist $\alpha_1,\alpha_2\in [1,\infty]$ and $q_1,q_2\in [1,\infty]$ such that the following hold.
\begin{enumerate}
\item $|V|^{1/2}A|V|^{1/2}\in\mathfrak{S}^{\alpha_1}(L^2(X))$ for every $V\in L^{q_1}(X)$, and
\begin{align*}
\||V|^{1/2}A|V|^{1/2}\|_{\mathfrak{S}^{\alpha_1}}\leq C_1\|V\|_{L^{q_1}}.
\end{align*}
\item $|V|^{1/2}A|V|^{1/2}\in\mathfrak{S}^{\alpha_2}(L^2(X))$ for every $V\in L^{q_2}(X)$, and
\begin{align*}
\||V|^{1/2}A|V|^{1/2}\|_{\mathfrak{S}^{\alpha_2}}\leq C_2\|V\|_{L^{q_2}}.
\end{align*}
\end{enumerate} 
For $\theta\in[0,1]$, let $\alpha_{\theta}^{-1}=(1-\theta)\alpha_1^{-1}+\theta\alpha_2^{-1}$ and $q_{\theta}^{-1}=(1-\theta)q_1^{-1}+\theta q_2^{-1}$. 
Then $|V|^{1/2}A|V|^{1/2}\in\mathfrak{S}^{\alpha_{\theta}}(L^2(X))$ for every $V\in L^{q_{\theta}}(X)$, and
\begin{align*}
\||V|^{1/2}A|V|^{1/2}\|_{\mathfrak{S}^{\alpha_{\theta}}}\leq C_1^{1-\theta}C_2^{\theta}\|V\|_{L^{q_{\theta}}}.
\end{align*}
\end{lemma}

\begin{proof}
We fix $\theta\in [0,1]$ and $V\in L^{q_{\theta}}(X)\cap L^{\infty}(X)$ and consider the family of operators
\begin{align*}
T_z=|V|^{\frac{z}{2x_{\theta}}}A|V|^{\frac{z}{2x_{\theta}}},\quad q_1\leq \re \,z\leq q_2,
\end{align*}
where $x_{\theta}=(1-\theta)q_1+\theta q_2$. 
Let $F$ be a finite rank operator on $L^2(X)$, and let $F=U|F|$ be its polar decomposition. We assume that $\|F\|_{\mathfrak{S}^{\alpha_{\theta}'}}=1$, where $\alpha_{\theta}'$ is the conjugate exponent to $\alpha_{\theta}$, i.e.\ $1/\alpha_{\theta}'+1/\alpha_{\theta}=1$. We consider the function
\begin{align*}
f(z)=\Tr(T_zU|F|^{l(z)}),\quad q_1\leq \re\,z\leq q_2, 
\end{align*}
where $l(z)$ is the linear function defined by
\begin{align*}
l(z)=\alpha_{\theta}'\left(\frac{q_2-z}{q_2-q_1}\frac{1}{\alpha_2'}+\frac{z-q_1}{q_2-q_1}\frac{1}{\alpha_1'}\right).
\end{align*}
Let $\{s_j^2\}_{j=1^N}$ be the singular values of $F$. Evaluating the trace with respect to a normalized eigenbasis $\{\phi_j\}_{j=1}^N$ of $F^*F$, we have that
\begin{align*}
f(z)=\sum_{j=1}^Ns_j^{l(z)}\langle|V|^{\frac{z}{2x_{\theta}}} \phi_j,A|V|^{\frac{z}{2x_{\theta}}} U\phi_j\rangle.
\end{align*}
Thus, $f:S\to\C$ is continuous on the strip $S=\set{z\in\C}{q_1\leq\re\,z\leq q_2}$ and analytic in its interior. It is easy to see that we have the estimate
\begin{align*}
|f(z)|\leq Ns_0^{\frac{\alpha_{\theta}'}{\alpha_1}}\|V\|_{L^{\infty}}^{\frac{q_2}{x_{\theta}}}\|A\|,\quad z\in S.
\end{align*}
Moreover, for $t\in\R$, by H\"older's inequality, we have that
\begin{align*}
|f(q_1+\I t)|&\leq \|T_{q_1+\I t}\|_{\mathfrak{S}^{\alpha_2}}\|F\|_{\mathfrak{S}^{\alpha_2'l(q_1)}}^{l(q_1)}\leq C_2\|V\|_{L^{\frac{q_1q_2}{x_{\theta}}}}^{\frac{q_1}{x_{\theta}}},\\
|f(q_2+\I t)|&\leq \|T_{q_2+\I t}\|_{\mathfrak{S}^{\alpha_1}}\|F\|_{\mathfrak{S}^{\alpha_1'l(q_2)}}^{l(q_2)}\leq C_1\|V\|_{L^{\frac{q_1q_2}{x_{\theta}}}}^{\frac{q_2}{x_{\theta}}}.
\end{align*}
Note that $q_1q_2/x_{\theta}=q_{\theta}$. By the three lines theorem, it follows that
\begin{align*}
|f(x_{\theta})|\leq C_2^{1-\theta}C_1^{\theta}\|V\|_{L^{q_{\theta}}}. 
\end{align*}
Since this inequality holds for any finite rank operator $F$ with
$\|F\|_{\mathfrak{S}^{\alpha_{\theta}'}}=1$, it follows that $T_{x_{\theta}}=|V|^{1/2}A|V|^{1/2}\in\mathfrak{S^{\alpha_{\theta}}}$ and 
\begin{align*}
\||V|^{1/2}A|V|^{1/2}\|_{\mathfrak{S}^{\alpha_{\theta}'}}\leq C_2^{1-\theta}C_1^{\theta}\|V\|_{L^{q_{\theta}}}.
\end{align*}
The restriction that $V$ is bounded can be removed by a density argument.
\end{proof}

\noindent
{\bf Acknowledgements.} 
{\small The author would like to thank Ari Laptev and Carlos E.\ Kenig for useful discussions. Moreover, the author is indebted to the referee for suggesting a stronger version of Lemma \ref{lemma schatten estimate low frequency}. Support of Schweizerischer Nationalfonds, SNF, through the postdoc stipends PBBEP2\_\_136596 and P300P2\_\_147746 is gratefully acknowledged.}

\bibliographystyle{plain}
\bibliography{bibliography}

\begin{thebibliography}{10}

\bibitem{AAD01}
A.~A. Abramov, A.~Aslanyan, and E.~B. Davies.
\newblock Bounds on complex eigenvalues and resonances.
\newblock {\em J. Phys. A}, 34(1):57--72, 2001.

\bibitem{BorichevEtAl2009}
A.~Borichev, L.~Golinskii, and S.~Kupin.
\newblock A {B}laschke-type condition and its application to complex {J}acobi
  matrices.
\newblock {\em Bull. Lond. Math. Soc.}, 41(1):117--123, 2009.

\bibitem{DemuthEtAl2009}
M.~Demuth, M.~Hansmann, and G.~Katriel.
\newblock On the discrete spectrum of non-selfadjoint operators.
\newblock {\em J. Funct. Anal.}, 257(9):2742--2759, 2009.

\bibitem{DemuthEtAl2013}
M.~Demuth, M.~Hansmann, and G.~Katriel.
\newblock Eigenvalues of non-selfadjoint operators: a comparison of two
  approaches.
\newblock In {\em Mathematical physics, spectral theory and stochastic
  analysis}, volume 232 of {\em Oper. Theory Adv. Appl.}, pages 107--163.
  Birkh\"auser/Springer Basel AG, Basel, 2013.

\bibitem{Dubuisson2014frac}
C.~{Dubuisson}.
\newblock {Notes on Lieb-Thirring type inequality for a complex perturbation of
  fractional Schr\"odinger operator}.
\newblock {\em ArXiv e-prints}, March 2014.

\bibitem{Dubuisson2014}
C.~Dubuisson.
\newblock On quantitative bounds on eigenvalues of a complex perturbation of a
  {D}irac operator.
\newblock {\em Integral Equations Operator Theory}, 78(2):249--269, 2014.

\bibitem{DS2}
Nelson Dunford and Jacob~T. Schwartz.
\newblock {\em Linear operators. {P}art {II}}.
\newblock Wiley Classics Library. John Wiley \& Sons Inc., New York, 1988.
\newblock Spectral theory. Selfadjoint operators in Hilbert space, With the
  assistance of William G. Bade and Robert G. Bartle, Reprint of the 1963
  original, A Wiley-Interscience Publication.

\bibitem{Eskin2011}
Gregory Eskin.
\newblock {\em Lectures on linear partial differential equations}, volume 123
  of {\em Graduate Studies in Mathematics}.
\newblock American Mathematical Society, Providence, RI, 2011.

\bibitem{Frank11}
R.~L. Frank.
\newblock Eigenvalue bounds for {S}chr\"odinger operators with complex
  potentials.
\newblock {\em Bull. Lond. Math. Soc.}, 43(4):745--750, 2011.

\bibitem{FrLaLiSe06}
R.~L. Frank, .~Laptev, E.~H. Lieb, and R.~Seiringer.
\newblock Lieb-{T}hirring inequalities for {S}chr\"odinger operators with
  complex-valued potentials.
\newblock {\em Lett. Math. Phys.}, 77(3):309--316, 2006.

\bibitem{FrankSabin14}
R.~L. {Frank} and J.~{Sabin}.
\newblock {Restriction theorems for orthonormal functions, Strichartz
  inequalities, and uniform Sobolev estimates}.
\newblock {\em ArXiv e-prints}, April 2014.

\bibitem{FrankSimon2015}
R.~L. {Frank} and B.~{Simon}.
\newblock {Eigenvalue bounds for Schr\"odinger operators with complex
  potentials. II}.
\newblock {\em ArXiv e-prints}, April 2015.

\bibitem{GelfandShilov1977}
I.~M. Gelfand and G.~E. Shilov.
\newblock {\em Generalized functions. {V}ol. 1}.
\newblock Academic Press [Harcourt Brace Jovanovich, Publishers], New
  York-London, 1964 [1977].
\newblock Properties and operations, Translated from the Russian by Eugene
  Saletan.

\bibitem{MR0246142}
I.~C. Gohberg and M.~G. Kre{\u\i}n.
\newblock {\em Introduction to the theory of linear nonselfadjoint operators}.
\newblock Translated from the Russian by A. Feinstein. Translations of
  Mathematical Monographs, Vol. 18. American Mathematical Society, Providence,
  R.I., 1969.

\bibitem{KRS}
C.~E. Kenig, A.~Ruiz, and C.~D. Sogge.
\newblock Uniform {S}obolev inequalities and unique continuation for second
  order constant coefficient differential operators.
\newblock {\em Duke Math. J.}, 55(2):329--347, 1987.

\bibitem{LaSa09}
A.~Laptev and O.~Safronov.
\newblock Eigenvalue estimates for {S}chr\"odinger operators with complex
  potentials.
\newblock {\em Comm. Math. Phys.}, 292(1):29--54, 2009.

\bibitem{MR1168960}
Gerd Mockenhaupt, Andreas Seeger, and Christopher~D. Sogge.
\newblock Local smoothing of {F}ourier integral operators and
  {C}arleson-{S}j\"olin estimates.
\newblock {\em J. Amer. Math. Soc.}, 6(1):65--130, 1993.

\bibitem{MR1217706}
Ch. Pommerenke.
\newblock {\em Boundary behaviour of conformal maps}, volume 299 of {\em
  Grundlehren der Mathematischen Wissenschaften [Fundamental Principles of
  Mathematical Sciences]}.
\newblock Springer-Verlag, Berlin, 1992.

\bibitem{Ruiz2002}
A.~{Ruiz}.
\newblock {Harmonic Analysis and inverse problems}.
\newblock {\em Lecture notes}, 2002.

\bibitem{Sa10}
O.~Safronov.
\newblock Estimates for eigenvalues of the {S}chr\"odinger operator with a
  complex potential.
\newblock {\em Bull. Lond. Math. Soc.}, 42(3):452--456, 2010.

\bibitem{Sambou2014}
D.~Sambou.
\newblock Lieb-{T}hirring type inequalities for non-self-adjoint perturbations
  of magnetic {S}chr\"odinger operators.
\newblock {\em J. Funct. Anal.}, 266(8):5016--5044, 2014.

\bibitem{Simon2005}
B.~Simon.
\newblock {\em Trace ideals and their applications}, volume 120 of {\em
  Mathematical Surveys and Monographs}.
\newblock American Mathematical Society, Providence, RI, second edition, 2005.

\bibitem{Stein1993}
E.~M. Stein.
\newblock {\em Harmonic analysis: real-variable methods, orthogonality, and
  oscillatory integrals}, volume~43 of {\em Princeton Mathematical Series}.
\newblock Princeton University Press, Princeton, NJ, 1993.
\newblock With the assistance of Timothy S. Murphy, Monographs in Harmonic
  Analysis, III.

\bibitem{Tomas1975}
P.~A. Tomas.
\newblock A restriction theorem for the {F}ourier transform.
\newblock {\em Bull. Amer. Math. Soc.}, 81:477--478, 1975.

\end{thebibliography}
\end{document}